\newcommand{\D}{\mathcal{D}}
\newcommand{\F}{\mathcal{F}}
\newcommand{\cS}{\mathcal{S}}
\newcommand{\cR}{\mathcal{R}}
\newcommand{\sS}{\mathscr{S}}
\newcommand{\cG}{\mathcal{G}}
\newcommand{\C}{\mathbb{C}}
\newcommand{\N}{\mathbb{N}}
\newcommand{\R}{\mathbb{R}}
\newcommand{\al}{\alpha}
\newcommand{\be}{\beta}
\newcommand{\ga}{\gamma}
\newcommand{\de}{\delta}
\newcommand{\e}{\varepsilon}
\newcommand{\fy}{\varphi}
\newcommand{\om}{\omega}
\newcommand{\la}{\lambda}
\newcommand{\te}{\theta}
\newcommand{\s}{\sigma}
\newcommand{\ta}{\tau}
\newcommand{\ka}{\kappa}
\newcommand{\x}{\xi}
\newcommand{\y}{\eta}
\newcommand{\z}{\zeta}
\newcommand{\ro}{\rho}
\newcommand{\De}{\Delta}
\newcommand{\Om}{\Omega}
\newcommand{\Ga}{\Gamma}
\newcommand{\Te}{\Theta}
\newcommand{\p}{\partial}
\newcommand{\na}{\nabla}
\newcommand{\re}{\mathop{\mathrm{Re}}}
\newcommand{\sech}{\operatorname{sech}}
\newcommand{\Li}{\operatorname{Li}}
\newcommand{\lec}{\lesssim}
\newcommand{\gec}{\gtrsim}
\newcommand{\I}{\infty}
\newcommand{\ti}{\widetilde}
\newcommand{\ck}{\check}
\newcommand{\U}[1]{\ \underline{#1}\ }
\newcommand{\LR}[1]{{\langle #1 \rangle}}
\newcommand{\EQ}[1]{\begin{equation}\begin{split} #1 \end{split}\end{equation}}
\newcommand{\Br}[1]{\left\{#1\right\}}
\newcommand{\BR}[1]{\left[#1\right]}
\newcommand{\tand}{\ \text{ and }\ }
\newcommand{\Del}[1]{}
\newcommand{\CAS}[1]{\begin{cases} #1 \end{cases}}
\newcommand{\pt}{&}
\newcommand{\pr}{\\ &}
\newcommand{\pq}{\quad}
\newcommand{\pQ}{\qquad}
\newcommand{\pn}{}
\newcommand{\prq}{\\ &\quad}
\newcommand{\prQ}{\\ &\qquad}
\numberwithin{equation}{section}
\newtheorem{thm}{Theorem}[section]
\newtheorem{cor}[thm]{Corollary}
\newtheorem{lem}[thm]{Lemma}
\theoremstyle{remark}
\newcommand{\ox}{\otimes}
\newcommand{\el}{{\bf e}}
\begin{document}
\subjclass[2010]{35J20, 35A23, 35B33, 46E35} 

\title[Maximizing the Trudinger-Moser inequalities]{Sharp threshold nonlinearity for maximizing \\ the Trudinger-Moser inequalities}

\author{S.~Ibrahim}
\address{Department of Mathematics and Statistics, University of Victoria}

\author{N.~Masmoudi}
\address{The Courant Institute for Mathematical Sciences, New York University} 

\author{K.~Nakanishi}
\address{Research Institute for Mathematical Sciences, Kyoto University}

\author{F.~Sani}
\address{Dipartimento di Matematica Federigo Enriques, Universit\`a degli Studi di Milano}

\begin{abstract}
We study existence of maximizer for the Trudinger-Moser inequality with general nonlinearity of the  critical growth on $\R^2$, as well as on the disk. We derive a very sharp threshold nonlinearity between the existence and the non-existence in each case, in asymptotic expansions with respect to growth and decay of the function. The expansions are explicit, using Ap\'ery's constant. 
We also obtain an asymptotic expansion for the exponential radial Sobolev inequality on $\R^2$. 
\end{abstract}

\maketitle

\tableofcontents

\section{Introduction}
The Trudinger-Moser inequality is a well-known substitute for the failed critical Sobolev embedding $H^{1,d}(\Om)\not\subset L^\I(\Om)$ on bounded domains $\Om\subset\R^d$ of $d\ge 2$. The sharp version by Moser \cite{Mo} reads, in the simplest case $d=2$,
\EQ{ \label{oTM}
 u\in H^1_0(\Om),\ \int_\Om |\na u|^2dx \le 4\pi \implies \int_\Om e^{u^2}dx \le C|\Om|,}
for some universal constant $C$, whose optimal value remains unknown to date. The kinetic energy constraint $4\pi$ is chosen in this paper to normalize the nonlinearity, but it can easily be changed to any other constant by a suitable multiple. For example, $u/\sqrt{4\pi}$ satisfies the constraint with $1$, for which the exponential becomes $e^{4\pi u^2}$. 

A natural extension of the above inequality to the case $\Om=\R^2$ was obtained in \cite{TMX}, with a necessary and sufficient condition for general nonlinear energy to be bounded, which reads as follows. 
Let $g:[0,\I)\to\R$ be a continuous function. We have an inequality of the form 
\EQ{ \label{TM on R2}
 u\in H^1(\R^2),\ \int_{\R^2}|\na u|^2 dx\le 4\pi \implies \int_{\R^2}g(u^2)dx \le C\int_{\R^2}|u|^2dx,}
{\it if and only if} $g$ satisfies
\EQ{ \label{TM cond B}
 \limsup_{s\to\I}se^{-s} g(s)<\I \tand \limsup_{s\to0}g(s)/s<\I,}
in other words,
\EQ{
 g(u^2) \lec \left.u^{-2}e^{u^2}\right|_{|u|\gg 1} +  u^2. }

The main question in this paper is for which $g$ a maximizer $u$ exists for the optimal constant $C$ in  \eqref{TM on R2}, which is defined by  
\EQ{
 \pt \cR(g,u):=\|u\|_{L^2(\R^2)}^{-2}\int_{\R^2}g(u^2)dx, 
 \pr \cS(g):=\sup\left\{\cR(g,u) \middle| u\in H^1(\R^2),\ \|\na u\|_{L^2(\R^2)}^2 \le 4\pi \right\}.}
Since the work of Br\'ezis and Nirenberg \cite{BN} on the critical Sobolev embedding of $H^1(\Om)$ for $d\ge 3$, it is well known that existence of maximizer for critical inequalities is subtle and dependent on lower order perturbations of nonlinearity. 
Our analysis in this paper reveals the threshold nonlinearity for $\cS(g)$ asymptotically as $u\to\I$, which is finer than \eqref{TM cond B}, as well as concentrating profile in the case of non-existence. 

For the original Trudinger-Moser inequality \eqref{oTM} in the case of disk $\Om=D$, 
\EQ{
 D:=\{x\in\R^2\mid |x|<1\},} 
the existence of maximizer was proven by Carleson and Chang \cite{CC}. Since then, the same question has been asked and solved for various versions of the Trudinger-Moser inequality. 
In particular, for a well-known version on $\R^2$ by Ruf \cite{Ruf}: 
\EQ{ \label{Ruf}
 u\in H^1(\R^2),\ \int_{\R^2}(|\na u|^2+|u|^2)dx\le 4\pi \implies \int_{\R^2}(e^{u^2}-1)dx \le C,}
the existence of maximizer was also proven in that paper. 

Concerning the non-existence, Pruss \cite{Pru} obtained a general result for perturbations of critical inequalities, which was applied to the Trudinger-Moser, but without precise characterization of the perturbation. 
In a more concrete setting, Ishiwata \cite{I} proved non-existence in the subcritical case of \eqref{Ruf} when $e^{u^2}$ is replaced with $e^{\al u^2}$ for $\al>0$ small enough, by the vanishing loss of compactness.  More recently, Mancini and Thizy \cite{MT} proved non-existence for the Adimurthi-Druet version:
\EQ{ \label{AD}
 u\in H^1_0(\Om), \ \|\na u\|_{L^2(\Om)}^2 \le 4\pi,\ \al<\la_1(\Om) \implies \int_\Om e^{(1+\al\|u\|_{L^2}^2)u^2}dx \le C(\Om)}
by the concentration loss of compactness, when $\al$ is close to $\la_1(\Om)$: the first eigenvalue of the Dirichlet Laplacian. 
In the original setting of \eqref{oTM}, Thizy \cite{T} obtained non-existence by concentration, as well as existence, for perturbed nonlinearity with some sharp conditions, which can be made explicit in the case of the disk.  
Our result given below (Theorem \ref{thm:disk}) can be regarded as an improvement of \cite{T} in the case of disk, concerning the sharp threshold growth. 

Now let us specify the main question in this paper. 
\cite{TMX} also proved that compactness for general sequences of radial functions holds in \eqref{TM on R2} if and only if $g$ satisfies 
\EQ{ \label{TM cond C}
   \lim_{s\to\I}se^{-s} g(s)=0 \tand \lim_{s\to0}g(s)/s=0.}
For the question of maximizer, we may assume the second condition without losing generality if $g$ is differentiable at $0$, because we have
\EQ{ \label{remove mass}
 \cR(g(s)-ms,u)=\cR(g,u)-m}
for any $m\in\R$, and so $\cS(g(s)-ms)$ is attained if and only if $\cS(g)$ is. 

Therefore for existence of maximizer in \eqref{TM on R2}, it remains only to investigate the nonlinearity $g$ with the critical growth for $u\to\I$, namely
\EQ{
   0<\liminf_{s\to\I} se^{-s}g(s) \le \limsup_{s\to\I} se^{-s}g(s)<\I.}
Ignoring the oscillatory case where the middle inequality is strict, we are thus lead to study whether $\cS(g)$ is attained for those continuous $g:[0,\I)\to\R$ satisfying
\EQ{ \label{cond f}
 \lim_{s\to\I}se^{-s}g(s)=1 \tand \lim_{s\to+0}g(s)/s=0.}
Let $\cG$ be the set of all such functions (nonlinearity) $g$, and let
\EQ{
 \pt \cG_M:=\{g\in\cG\mid \exists u\in H^1(\R^2),\ \|\na u\|_{L^2(\R^2)}^2\le 4\pi,\ \cR(g,u)=\cS(g)\}, 
 \pr \cG_N:=\cG\setminus\cG_M,}
be the subsets of those with and without maximizer, respectively. 

Then the critical growth of nonlinearity separating $\cG_M$ and $\cG_N$ turns out to be
\EQ{ \label{sharp thres}
 g(u^2)=u^{-2}e^{u^2}(1-c_Eu^{-4}+O(u^{-6})) \pq(|u|\to\I),}
where $c_E$ is explicitly written using the Riemann zeta function (or Ap\'ery's constant)
\EQ{ \label{def cE}
 c_E:=4+2\z(3).}
A simple example of analytic $g:\R\to\R$ satisfying \eqref{cond f} and \eqref{sharp thres} is 
\EQ{ \label{most critical}
 g(u^2)=\frac{u^2e^{u^2}}{c_E+u^4}.}

For the precise statement, it is convenient to introduce a cut-off for large $u$ of the exponential function: For any $L>0$ and $s>0$, we denote
\EQ{
 \el_L^s :=\CAS{e^s&(s> L^2),\\ 0 &(s\le L^2).}}
Since $s$ in $g(s)$ corresponds to $u^2$, the cut-off for $u$ is at $u=L$ in the above notation. 
The main result of this paper on \eqref{TM on R2} is as follows. 
\begin{thm} \label{thm:main}
We have $\I>\cS(g)\ge\cS_\I:=e^{2-2\ga}$ for all $g\in\cG$, with $\ga$ denoting Euler's constant, where the equality holds if $g\in\cG_N$. There is an absolute constant $C_*>0$ such that the following hold. 
Let $p\in(0,3]$, $q\in(p,\I)$, $a\in(0,\I)$ and $b\in\R$. In the case of $p=3$, let $a\ge C_*$. 
Then for any $L>0$, all $g\in\cG$ satisfying one of the following \eqref{ex by large}--\eqref{ex by small} are  in $\cG_M$ {\rm (Existence):}
\begin{enumerate}
\item $\forall s>0$, $g(s) \ge s^{-1}\el_L^s[1-c_Es^{-2}+a s^{-p}]$. \label{ex by large}
\item $\forall s>0$, $g(s) \ge s^{-1}\el_L^s[1-c_Es^{-2}+b s^{-q}]+as^{1+p}$. \label{ex by small}
\end{enumerate}
On the other hand, if $L>0$ is large enough (depending on $a,b,p,q$), then all $g\in\cG$ satisfying one of the following \eqref{nex by large}--\eqref{nex by small} are in $\cG_N$ {\rm (Non-existence):} 
\begin{enumerate} \setcounter{enumi}{2}
\item $\forall s>0$, $g(s) \le s^{-1}\el_L^s[1-c_Es^{-2}-a s^{-p}]$. \label{nex by large}
\item $\forall s>0$, $g(s) \le s^{-1}\el_L^s[1-c_Es^{-2}+ bs^{-q}]- as^{1+p}$. \label{nex by small}
\end{enumerate}
\end{thm}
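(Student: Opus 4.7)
The plan is to attack the theorem by (a) reduction to radially decreasing functions via Schwarz symmetrization combined with the exponential substitution $u(x)=(4\pi)^{-1/2}\fy(-2\log|x|)$, which turns the constraint $\|\na u\|_{L^2}^2\le 4\pi$ into $\int_\R|\fy'|^2\,dt\le 1$, (b) a sharp asymptotic expansion of $\cR(g,u)$ along a one-parameter concentrating Moser family that identifies $c_E=4+2\z(3)$ as the precise threshold, and (c) the dichotomy: a strict gap $\cS(g)>\cS_\I$ forces existence by concentration-compactness, whereas equality $\cS(g)=\cS_\I$ forces non-existence since the supremum is then approached only by concentrating sequences escaping $H^1$-weak compactness.

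Finiteness $\cS(g)<\I$ is immediate from \eqref{TM cond B}, which is implied by \eqref{cond f}. The lower bound $\cS(g)\ge\cS_\I=e^{2-2\ga}$ is obtained by inserting a Moser-type test function $u_\la$ (linearly growing core of slope $\la^{-1/2}$ truncated at height $\sqrt{\la}$, plus a smooth $L^2$-tail) into $\cR(g,u_\la)$ and letting $\la\to\I$. Euler's constant $\ga$ enters through the identity $\int_0^\I e^{-t}\log t\,dt=-\ga$ when the tail contribution is integrated against the exponential of the core.

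The analytical heart of the proof is the sub-leading expansion, which I expect to take the schematic form
\EQN{
 \cR(g_*,u_\la)=\cS_\I+A\bigl([4+2\z(3)]-c_E\bigr)\la^{-2}+O(\la^{-3})
}
for the threshold nonlinearity $g_*(s)=s^{-1}e^s(1-c_E s^{-2})$, with some computable $A>0$. Ap\'ery's constant will appear through $\int_0^\I t^2(e^t-1)^{-1}\,dt=2\z(3)$ when the $s^{-2}$ correction of $g_*$ is integrated against the concentrating Moser profile. The value $c_E=4+2\z(3)$ is thus forced as the unique one annihilating the $\la^{-2}$ coefficient, so that the sign of the $\la^{-2}$ perturbation toggles exactly on crossing it. Carrying out this expansion through two sub-leading orders, with logarithmic integrations and careful book-keeping of cut-off errors, is the main technical obstacle; the threshold case $p=3$ in (i) additionally requires control of the next-order error, which is the reason for imposing $a\ge C_*$.

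Given the expansion, existence in (i)--(ii) follows by testing: substituting the lower bound on $g$ (with the extra $+as^{-p}$ or $+as^{1+p}$) into $\cR(g,u_\la)$ produces $\cR(g,u_{\la_0})>\cS_\I$ for some $\la_0$, hence $\cS(g)>\cS_\I$. A radial maximizing sequence then decomposes via the concentration-compactness alternative into concentration, vanishing, or compactness; concentration is excluded because the expansion applied to any such sequence caps the limit at $\cS_\I$, vanishing is excluded because $g(s)/s\to 0$ at $s=0$ forces $\cR\to 0$, leaving compactness and the maximizer. Non-existence in (iii)--(iv) is the contrapositive: for $L$ large, the expansion combined with the upper bound on $g$ gives $\cR(g,u)<\cS_\I$ for every admissible $u\in H^1(\R^2)$, while the Moser family still yields $\lim_{\la\to\I}\cR(g,u_\la)=\cS_\I$; thus $\cS(g)=\cS_\I$ is not attained. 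The largeness of $L$ ensures that the cut-off does not interfere with the two leading terms of the expansion.
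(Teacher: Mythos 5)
Your strategy has the right shape for the existence half but the non-existence half has a genuine gap, precisely where the paper introduces its key new idea.

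For existence (i)--(ii), testing a one-parameter Moser-type family $u_\lambda$ and showing $\cR(g,u_{\lambda_0})>\cS_\I$ for some $\lambda_0$ is indeed sufficient, modulo the subcritical compactness argument ruling out vanishing; this is close in spirit to what the paper does, although the paper's test functions are the actual constrained maximizers of $\cS_H(g^*_H)$ rather than an a-priori-chosen Moser profile, and the expansion parameter is the height $H=u(R)$ at the half-energy radius rather than $\lambda$. Your heuristics about where $\gamma$ and $\zeta(3)$ come from are roughly in the right direction (the paper's $\gamma$ enters via the $r\to0$ asymptotics of the Bessel potential $G$ in the exponential radial Sobolev lemma, and $\zeta(3)$ via $\int_\R(2-w_0)|v_+|^2\,dt$ which reduces to $\int_0^\infty x^2/(e^x-1)\,dx$).

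For non-existence (iii)--(iv), your plan does not close. You write that ``the expansion combined with the upper bound on $g$ gives $\cR(g,u)<\cS_\I$ for every admissible $u$,'' but the only expansion your proposal sets up is $\cR(g_*,u_\lambda)=\cS_\I+A(\cdots)\lambda^{-2}+O(\lambda^{-3})$ \emph{along a specific one-parameter family} $u_\lambda$. Such an expansion yields lower bounds on $\cS(g)$ (by testing), hence existence when the perturbation has a favorable sign; it cannot by itself produce the required upper bound $\cR(g,u)\le\cS_\I-\delta_H$ for \emph{all} admissible $u$, which is what non-existence needs. The concentration-compactness alternative also does not save this: to exclude concentrating sequences from improving on $\cS_\I$ you already need to know that every concentrating sequence has $\cR(g,\cdot)$ capped below $\cS_\I$, which is exactly the missing estimate. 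The paper handles this by introducing the constrained supremum $\cS_H(g)$ over all $u$ with $u(R)=H$ and half the kinetic energy in $|x|<R$, proving the asymptotic formula $\cS_H(g^*_H)=\cS_\I+O(H^{-6})$ by analyzing the Euler--Lagrange equation of the maximizer (soliton approximation plus its linearization), and using the cut-off $\el_L$ with $L\gg1$ in Lemma \ref{lem:redu} to force every candidate with $\{g\}(u)\gtrsim1$ to have $H$ large, so that the bound $\{g\}(u)\le\cS_H(g^*_H)\bigl[1-a(2H+O(\sqrt{\log H}))^{-2p}\bigr]+O(H^{-6})<\cS_\I$ applies uniformly. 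Without a device like $\cS_H$ that turns the expansion into a bound over \emph{all} candidates at each concentration scale, the non-existence claims (iii)--(iv) remain unproved in your outline.

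A smaller point: your schematic expansion lists only one sub-leading term of order $\lambda^{-2}$. In the paper, the $H^{-2}$ term cancels identically (between the central integral and the exponential radial Sobolev factor), and so does the $H^{-4}$ term once $c_E$ is chosen; hence the first nonvanishing perturbative order is $H^{-6}$. That double cancellation is why the theorem's threshold involves the exponent $p\le3$ and why a third-order expansion around the soliton is unavoidable; a two-term expansion of the kind you sketch would not discriminate the critical cases.
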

In particular, the maximum $\cS(g)$ is attained for the critical nonlinearity $g(u^2)=u^{-2}\el_L^{u^2}$ (satisfying \eqref{ex by large} with $p>2$ and $b=0$), but the existence of maximizer is unstable for lower order perturbations of $O(u^{-6+\e}\el_L^{u^2})$ and of $O(u^{6-\e})$ (for any $\e>0$), respectively by \eqref{nex by large} and \eqref{nex by small} with $p<2$. 
In the more critical case \eqref{most critical}, it is unstable for perturbations of $O(u^{-8+\e}\el_L^{u^2})$ and of $O(u^{8-\e})$. 
The non-existence part with the conditions \eqref{nex by large} and \eqref{nex by small} answers negatively to the question left open in \cite{era}. 

The conditions \eqref{ex by large} and \eqref{nex by large} describe the threshold between $\cG_M$ and $\cG_N$ for lower order perturbations in large $|u|$, to the order $O(u^{-8}e^{u^2})$. 
Note that the leading term in $\cG$ is $u^{-2}e^{u^2}$, so the second order term $u^{-4}e^{u^2}$ is absent in the threshold nonlinearity \eqref{sharp thres}. This absence necessitates the third order expansion detecting the sign on the next term $u^{-6}e^{u^2}$, in order to solve the question in the critical case $g(u^2)=u^{-2}\el_L^{u^2}$. 

The conditions \eqref{ex by small} and \eqref{nex by small} describe the threshold for perturbations in small $|u|$, to the order $O(u^8)$. Note that the term $bs^{-q}$ are giving more room for the condition to hold, which means that perturbation of $O(u^{-2-2q}e^{u^2})$ is dominated by the term $a u^{2+2p}$, in contribution to $\cS(g)$. 

The first sentence of the theorem implies the following ordered structure of $\cG_M$ or $\cG_N$: Let $g_1,g_2\in\cG$ satisfy $g_1(s)\le g_2(s)$ for all $s>0$. If $g_1\in\cG_M$ then $g_2\in\cG_M$ (or equivalently, if $g_2\in\cG_N$ then $g_1\in\cG_N$). Indeed, it is obvious if $\cS(g_2)>\cS_\I$. If $\cS_\I=\cS(g_2)=\cS(g_1)$  and $\fy\in H^1(\R^2)$ maximizes $\cS(g_1)$, then it also maximizes $\cS(g_2)$. 
In short, larger nonlinearity tends to attain the maximum. However, there is no global minimum of $\cG_M$, or global maximum of $\cG_N$, which would be the exact threshold if existed. Actually, the conditions \eqref{ex by small} and \eqref{nex by small} yield concrete examples of $(g_1,g_2)\in\cG_M\times\cG_N$ such that $g_1(s)<g_2(s)$ for large $s$.

A similar result is obtained for the original inequality \eqref{oTM} on $D$, where the threshold nonlinearity is given by 
\EQ{
 g(u^2):=e^{u^2}\BR{1-u^{-2}-c_D u^{-4}+O(u^{-6})}\pq(|u|\to\I),}
with an explicit constant
\EQ{
 c_D:=3/2+2\z(3).}
A simple example of analytic $g$ is 
\EQ{
 g(u^2)=\frac{u^4e^{u^2}}{c_D+1+u^2+u^4}.}

Note that the second order term ($-u^{-2}e^{u^2}$) is present in this case, in contrast to \eqref{sharp thres}. This means that the existence of maximizer is more stable for the original Trudinger-Moser \eqref{oTM} on the disk $D$ than \eqref{TM on R2} on the whole plane $\R^2$ with the critical nonlinearity $u^{-2}e^{u^2}$, which makes the latter problem more delicate. 
It is worth noting, however, that vanishing of the second order term was also observed in the asymptotic expansion by Mancini and Martinazzi \cite{MM} of $\|\na u\|_{L^2(D)}^2$ with respect to $\|u\|_{L^\I}$ for concentrating sequences of critical points for \eqref{oTM}. It does not seem clear if there is any relation to the vanishing observed in this paper on $\R^2$. 

To state the result on the disk in a way parallel to the $\R^2$ case, let 
\EQ{
 \cS^D(g):=\sup\Br{|D|^{-1}\int_D g(u^2)dx \Bigm| u\in H^1_0(D),\ \|\na u\|_2^2\le 4\pi}}
denote the best constant on the disk $D$. In this case, it is natural to assume 
\EQ{ \label{cond f D}
 \lim_{s\to\I} e^{-s}g(s)=1, \pq \lim_{s\to+0} g(s)=0,}
but as in the $\R^2$ case, the left limit can be changed to any positive number by multiplying $g$ with a constant, and the right limit can be changed to any real number by adding a constant to $g$. For the standard nonlinearity $e^{u^2}$, the right limit is $1$. 
\begin{thm} \label{thm:disk}
For any continuous $g:[0,\I)\to\R$ satisfying \eqref{cond f D}, we have $\I>\cS^D(g) \ge e$, where the equality holds if $\cS^D(g)$ is not attained. 
There is an absolute constant $C_*>0$ such that the following hold. 
Let $p\in(0,3]$, $q\in(p,\I)$, $a\in(0,\I)$ and $b\in\R$. In the case of $p=3$, let $a\ge C_*$. 
If $g$ satisfies \eqref{cond f D} and one of the following \eqref{ebl D}--\eqref{ebs D} for some $L>0$, then $\cS^D(g)$ is attained {\rm (Existence):} 
\begin{enumerate}
\item $\forall s>0$, $g(s) \ge \el_L^s[1-s^{-1}-c_Ds^{-2}+a s^{-p}]$. \label{ebl D}
\item $\forall s>0$, $g(s) \ge \el_L^s[1-s^{-1}-c_Ds^{-2}+b s^{-q}]+ a s^p$. \label{ebs D}
\end{enumerate}
On the other hand, if $g$ satisfies \eqref{cond f D} and one of the following \eqref{nbl D}--\eqref{nbs D} for sufficiently large $L>0$, depending on $p,q,a,b$, then $\cS^D(g)$ is not attained {\rm (Non-existence):}
\begin{enumerate} \setcounter{enumi}{2}
\item $\forall s>0$, $g(s) \le \el_L^s[1-s^{-1}-c_Ds^{-2}-a s^{-p}]$. \label{nbl D}
\item $\forall s>0$, $g(s) \le \el_L^s[1-s^{-1}-c_Ds^{-2}+bs^{-q}]-a s^p$. \label{nbs D}
\end{enumerate}
\end{thm}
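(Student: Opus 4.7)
The strategy mirrors the proof of Theorem~\ref{thm:main}, but is technically simpler because the disk is bounded and there is no vanishing loss of compactness to worry about. The first step is to reduce to radially symmetric decreasing competitors: Schwarz rearrangement preserves $\|\nabla u\|_{L^2(D)}^2$ and, because $g$ is asymptotically increasing by \eqref{cond f D}, does not decrease $\int_D g(u^2)\,dx$ up to a controllable contribution from the bounded nonmonotonic regime. I then apply the Moser substitution $u(r) = c\,v(-\log r)$ with a suitable normalizing constant $c$ and $v(0) = 0$, which converts the Dirichlet constraint $\|\nabla u\|_{L^2(D)}^2 \le 4\pi$ into a one-dimensional bound on $\int_0^\infty |v'|^2\,dt$ and rewrites
$$|D|^{-1}\int_D g(u^2)\,dx \,=\, 2\int_0^\infty g(c^2 v(t)^2)\,e^{-2t}\,dt.$$

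\textbf{Asymptotic expansion along the Moser profile.} The reference profile is the standard Moser sequence $v_L$, piecewise linear in $t$ and parameterized by its height $L$. Inserting the pointwise expansion $g(s) = e^s[1 - s^{-1} - c_D s^{-2} + O(s^{-3})]$ into the formula above and integrating against $e^{-2t}$ yields a convergent asymptotic expansion of the ratio as $L \to \infty$. The leading term is exactly $e$, the coefficient $-s^{-1}$ cancels the first correction, and the sign of the next correction depends on whether $c_D$ exceeds or falls short of the critical value. A careful bookkeeping of the third-order contribution, in which the Riemann zeta value $\zeta(3) = \sum_{k\ge 1} k^{-3}$ emerges from a convergent double series, identifies the threshold as $c_D = 3/2 + 2\zeta(3)$.

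\textbf{Existence versus non-existence.} Given the expansion, existence under \eqref{ebl D} or \eqref{ebs D} follows by exhibiting a competitor---a Moser profile of sufficiently large $L$, augmented in case \eqref{ebs D} by a small-scale bump that exploits the perturbation $a\,s^p$---whose ratio strictly exceeds $e$. Then a concentration-compactness argument forces any maximizing sequence into the dichotomy of either weak convergence to a nonzero $H^1_0$-limit or complete concentration at the origin; the latter is excluded because it caps the ratio at $e$, and the former yields a maximizer by Fatou and the weak lower semicontinuity of the Dirichlet norm. For non-existence under \eqref{nbl D} or \eqref{nbs D}, the reversed sign of the correction in the same expansion (combined with the variational upper bound along all admissible $v$) shows $\cS^D(g) \le e$ for every $u$, while the Moser sequence achieves $e$ only in the limit, so the supremum is realized only by vanishing concentration and is not attained.

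\textbf{Main obstacle.} The hardest step is obtaining an asymptotic expansion sharp enough to distinguish the two sides of the threshold, with the exact constant $c_D = 3/2 + 2\zeta(3)$ and the correct sign on the next correction; pushing to third order is essential, and uniformity in $L$ of the remainder estimates (especially against the perturbations $\pm a s^{-p}$ and $\pm a s^p$) requires care. A subsidiary subtlety is the endpoint case $p = 3$, $a \ge C_*$, where the perturbation $\pm a\,s^{-3}$ is of the same order as the uniform remainder in the expansion, so $a$ must exceed an absolute constant in order to dominate it---an issue entirely parallel to the analogous endpoint in Theorem~\ref{thm:main}.
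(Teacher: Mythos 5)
Your proposal correctly identifies the reduction to radial decreasing competitors, the logarithmic change of variables, the need for a third-order expansion to see the threshold, and the endpoint subtlety at $p=3$. But it is missing the central mechanism of the paper, and this gap is fatal for the non-existence half of the theorem.

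For non-existence, you need an \emph{upper bound on $|D|^{-1}\int_D g(u^2)\,dx$ that is uniform over every admissible $u$}, sharp to order $H^{-6}$ where $H$ measures the concentration. Testing along a particular family such as the piecewise-linear Moser sequence $v_L$ gives only a lower bound on the supremum, which is the wrong direction. Your phrase ``combined with the variational upper bound along all admissible $v$'' names the missing step but does not supply it. The paper's actual mechanism is to parametrize \emph{all} competitors by $H := u(R)$, where $R$ is the half-energy radius with $K_{(0,R)}(u)\le 1$ and $K_{(R,1)}(u)\le 1$, to show via the cutoff $\el_L$ that any $u$ with nontrivial nonlinear energy must have $H\gg 1$, and then to expand the \emph{constrained supremum} $\cS^D_H(f^*_H)$ as $H\to\I$. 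Because every $u$ belongs to some $X^D_H$, the expansion $\cS^D_H(f^*_H) = e + O(H^{-6})$ in \eqref{exp SD} really is a uniform upper bound, which is precisely what non-existence requires. Without the constrained-supremum reformulation there is no way to pass from ``the Moser sequence gives at most $e$'' to ``every $u$ gives at most $e$''.

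The second gap is that the piecewise-linear Moser profile is not a critical point of the rescaled functional and hence does not produce the correct higher-order coefficients. The sharp constant $c_D = 3/2 + 2\z(3)$ emerges from expanding the genuine maximizer of $\cS^D_H$, whose profile in the logarithmic variable is governed by the Euler--Lagrange equation \eqref{eq v 0} and is, to leading order, the soliton $\dot v \approx \tfrac a2(1-\tanh(t-T_a))$; the $\z(3)$ arises from the integral $\int_\R(w_0-2)^2 v_+^2\,dt$ in Lemma~\ref{zeta int}. A direct computation along the piecewise-linear sequence would yield different and strictly smaller coefficients at the relevant order, so even the existence half would not produce the claimed sharp threshold in the endpoint case $p=3$. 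Moreover, for existence the paper does not test with a Moser-type profile at all, but with the maximizer of $\cS^D_H(f^*_H)$ for a large fixed $H$, combined with the localization estimate \eqref{int el localize} to control the deviation between $g$ and $f^*_L$; the small-perturbation terms $\pm a s^p$ are then handled via the tail estimate $R_0\gec 1$ in \eqref{tail R D}. So while your plan shares the rearrangement and logarithmic substitution with the paper, it would need to be rebuilt on the constrained maximization $\cS^D_H$ and the soliton approximation to yield both the uniform bound and the exact constant.
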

In the above theorem, the first sentence was already proven by Carleson and Chang \cite{CC}. 
The explicit distinction between the existence and the non-existence was \cite[Open problem 2]{MM}.  The first solution was obtained for general bounded domains in \cite{T}, where the conditions are explicit in the case of disk \cite[Corollary 1.1]{T}. Roughly speaking, it considers nonlinearity in the form 
\EQ{
 0<g(s)=\CAS{e^s[1+c's^{-a'}(\log s)^{-b'}(1+o(1))] &(s\to\I)\\ e^s &(s\ll 1)}}
for some $c',b'\in\R$ and $a'\ge 0$, where $b'>0$ if $a'=0$. 
If $a'>1$ or $c'>0$, then the maximizer exists for all such $g$, while if $a'<1$ and $c'<0$, then the maximizer does not exist for some $g$. 
The existence part is covered by Theorem \ref{thm:disk}, \eqref{ebl D} with $p=1$ and $a=1/2$, for which we do not need to specify even the coefficient of $\el_L^ss^{-1}$, but its sign (which is negative) is enough. 
In this sense, the above result is much sharper about the threshold growth.  
However, the non-existence part is not really covered by \eqref{nbl D} with $p<1$, as it does not allow $g(s)=e^s$ for $s\ll 1$. 

Let us turn to some consequences of the above results on $\R^2$ for the ground state solutions of the nonlinear Schr\"odinger equation:  
\EQ{ \label{sNLS}
 -\De \fy + \om \fy = g'(\fy^2)\fy,\ \fy\in H^1(\R^2),}
where $\om>0$ is a free parameter (time frequency for the evolution). 
Here we assume 

\U{Assumption (A)}
\begin{itemize} 
\item $g:(0,\I)\to\R$ is $C^1$ and $g(s)=o(s)$ as $s\to+0$.
\item $sg'(s)\ge(1+\e)g(s)$ for all $s>0$ with some constant $\e>0$.
\item $g'(s)\le Ce^{Cs}$ for all $s>0$ with some constant $C<\I$. 
\end{itemize}
If the constrained minimization 
\EQ{
 k_\om(g):=\inf\left\{\|\na\fy\|_{L^2(\R^2)}^2 \Bigm| \fy\in H^1(\R^2),\ \int_{\R^2} g(\fy^2)dx=\om\int_{\R^2}|\fy|^2dx>0\right\}}
is attained by some $\fy\in H^1(\R^2)$, then an appropriate rescaling in the form $\fy(\la x)$ solves the above equation \eqref{sNLS}. 
In order to rescale the energy constraint, define $g_\la:[0,\I)\to\R$ for any $\la>0$ by
\EQ{
 g_\la(s)=g(\la s).} 
Then we have $k_\om(g)=\la k_{\la\om}(g_\la)$. 
By definition of $\cS(g)$, together with the above Assumption (A), we have $k_\om(g)>0$ and 
\EQ{ \label{omega range}
 \CAS{\om<\frac1\la\cS(g_\la) \implies & k_{\la\om}(g_\la)<4\pi \iff k_\om(g)<4\pi\la ,\\
 \om=\frac1\la\cS(g_\la) \implies &k_{\la\om}(g_\la)=4\pi \iff k_\om(g)=4\pi\la ,\\
 \om>\frac1\la\cS(g_\la) \implies &k_{\la\om}(g_\la)\ge 4\pi \iff k_\om(g)\ge 4\pi\la .}}
In the first case $\om<\cS(g_\la)/\la$, the kinetic energy level is in the subcritical range, so that $k_\om(g)$ is attained by compactness, cf.,~\cite{TMX}. 
If $\om>\cS(g_\la)/\la$ and $k_\om(g)=4\pi\la$, then $k_\om(g)$ is not attained, by the definition of $\cS(g)$. 
The above result implies that $k_\om(g)$ is not attained in the critical case $\om=\cS(g_\la)/\la$, provided that $g/a$ with some constant $a\in(0,\I)$ satisfies \eqref{cond f} and one of the non-existence conditions \eqref{nex by large} and \eqref{nex by small}. Then the equation \eqref{sNLS} has no solution satisfying 
\EQ{
 \om \ge \tfrac1\la \cS(g_\la) = \tfrac{a}{\la}\cS_\I \tand \|\na\fy\|_2^2 \le 4\pi\la.}
Therefore the correction in \cite{era}, which retracted some claims in the critical case---in particular existence of such a solution in \cite[Theorem 5.1]{TMX}---was indeed necessary for general nonlinearity. 
Of course, \eqref{sNLS} may have a solution with $\om\ge\cS(g_\la)/\la$ and $\|\na\fy\|_2^2>4\pi\la$, namely with supercritical energy, but it is a different issue. 

As for the preceding works in the critical setting, de Figueiredo and Ruf \cite{FR} proved existence of the ground state when $g(s)$ grows like $g(s)\sim s^ae^s$ as $s\to\I$ for $a>-1$. 
Since $a=-1$ is the critical case of \eqref{TM on R2}, we have $\cS(g)=\I$ for such nonlinearity, and so the ground state in the first case of \eqref{omega range}. 
Ruf and Sani \cite{RS} proved the existence in the critical case $g(s)\sim \be_0 s^{-1}e^{4\pi s}$ with sufficiently large $\be_0$ (for a fixed $\om=1$). 
The above argument implies the following consequence of Theorem \ref{thm:main}: 
\begin{cor}
Let $g:[0,\I)\to\R$ satisfy Assumption {\rm (A)} and 
\EQ{
 \lim_{s\to\I} se^{-\la s}g(s)=a}
for some $\la,a>0$. Then \eqref{sNLS} has a positive radial solution $\fy\in H^1(\R^2)$ for 
\EQ{
 0<\om< a\la^2\cS_\I,}
which is a mountain-pass critical point of the energy functional
\EQ{
 E_\om(\fy):=\int_{\R^2}[|\na\fy|^2+\om|\fy|^2-g(\fy^2)]dx.}
On the other hand, there is a function $g$ satisfying the above assumptions for which there is no mountain-pass critical point for any $\om\ge a\la^2\cS_\I$. 
\end{cor}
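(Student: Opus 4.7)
The plan is to convert the ground-state problem for \eqref{sNLS} into the constrained Trudinger--Moser maximization handled by Theorem \ref{thm:main}, by combining the nonlinearity rescaling $g_\mu(s):=g(\mu s)$ (for which $k_\om(g)=\mu\,k_{\mu\om}(g_\mu)$) with a spatial dilation that converts a constrained minimizer into an actual solution of \eqref{sNLS}. Set $\mu=1/\la$ and $h:=g_{1/\la}/(a\la)$: the hypothesis $\lim_{s\to\I}se^{-\la s}g(s)=a$ yields $\lim_{s\to\I}se^{-s}h(s)=1$, while Assumption (A) supplies $\lim_{s\to 0}h(s)/s=0$, so $h\in\cG$. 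Hence $\cS(g_{1/\la})=a\la\,\cS(h)\ge a\la\,\cS_\I$ by the first sentence of Theorem \ref{thm:main}, with equality when $h\in\cG_N$; consequently the critical frequency $\la\,\cS(g_{1/\la})$ appearing in the trichotomy \eqref{omega range} for $g_{1/\la}$ is bounded below by $a\la^2\cS_\I$.

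For existence, fix $0<\om<a\la^2\cS_\I\le\la\,\cS(g_{1/\la})$; the first line of \eqref{omega range} places us in the strictly subcritical regime, so the subcritical compactness of \cite{TMX} yields a minimizer $\psi\in H^1(\R^2)$ of $k_\om(g)$ with $\|\na\psi\|_2^2<4\pi\la$, which by Schwarz symmetrization we may take radial, positive, and non-increasing. The Lagrange multiplier rule gives $-\De\psi=\Lambda\bigl(g'(\psi^2)\psi-\om\psi\bigr)$ for some $\Lambda>0$, and then $\fy(x):=\psi(x/\sqrt\Lambda)$ solves \eqref{sNLS}. Assumption (A), particularly $sg'(s)\ge(1+\e)g(s)$ together with $g(s)=o(s)$ at $0$, provides the usual Ambrosetti--Rabinowitz mountain-pass geometry for $E_\om$ on $H^1_{\mathrm{rad}}(\R^2)$, and the scaling fibering $t\mapsto\fy(\cdot/t)$ traces out a MP path along which $E_\om$ attains its unique maximum at $t=1$, identifying $\fy$ as an MP critical point.

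For non-existence, pick $h_0\in\cG$ satisfying condition \eqref{nex by large} of Theorem \ref{thm:main} (so $h_0\in\cG_N$) and put $g(s):=a\la\,h_0(s/\la)$ for $s\ge s_0$ with $s_0$ large, extending $C^1$-smoothly to $[0,\I)$ so that Assumption (A) holds; this is possible because \eqref{nex by large} only constrains the large-$s$ behaviour. Then $g_{1/\la}/(a\la)$ agrees with $h_0$ on $[s_0,\I)$, so $\cS(g_{1/\la})=a\la\,\cS_\I$ and is not attained. A mountain-pass critical point of $E_\om$ at any $\om\ge a\la^2\cS_\I$ would, after the inverse spatial dilation (the Lagrange multiplier $\Lambda$ being pinned down by the Pohozaev identity), produce a radial maximizer of $\cR(g_{1/\la},\cdot)$ at the critical kinetic level $4\pi$, contradicting the non-attainability of $\cS(g_{1/\la})$ just established.

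The main obstacle is this last step of the non-existence argument: to rigorously extract, from a mountain-pass critical point in the critical-exponential regime, a radial maximizer of the Trudinger--Moser ratio at the precise kinetic level $4\pi$. This requires a careful Palais--Smale analysis of $E_\om$ near the MP level to rule out mass loss at infinity and concentration profiles other than the sharp one identified by Theorem \ref{thm:main}. Under Assumption (A) the MP level coincides with the infimum of $E_\om$ on the Nehari manifold and, via the scaling $t\mapsto\psi(\cdot/t)$, with the constrained minimum of $k_\om(g)$, so attainment of the MP level forces attainment of $\cS(g_{1/\la})$; verifying this chain of identifications in the critical regime is the technical heart of the proof.
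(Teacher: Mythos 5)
Your overall strategy matches what the paper intends: normalize via $h=g_{1/\la}/(a\la)\in\cG$, invoke Theorem \ref{thm:main} for the attainability of $\cS(h)$, and relate the mountain-pass level to the constrained minimization $k_\om(g)$ through \eqref{omega range}. Two bookkeeping slips: in the non-existence construction you need $g(s):=a\la\,h_0(\la s)$ so that $g_{1/\la}=a\la\,h_0$ (your $h_0(s/\la)$ gives $g_{1/\la}(s)=a\la\,h_0(s/\la^2)$, not $a\la\,h_0(s)$); and the subcritical kinetic bound from \eqref{omega range} with rescaling parameter $1/\la$ is $\|\na\psi\|_{L^2}^2<4\pi/\la$, not $4\pi\la$.

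The genuine gap is the fibering you use to identify the constrained minimizer as a mountain-pass critical point. You claim the dilation path $t\mapsto\fy(\cdot/t)$ ``traces out a MP path along which $E_\om$ attains its unique maximum at $t=1$.'' In two dimensions this fails: the Dirichlet energy is invariant under $x\mapsto x/t$, while both $\|\fy\|_{L^2}^2$ and $\int g(\fy^2)\,dx$ scale by the same factor $t^2$, so on the Pohozaev constraint $\int g(\fy^2)\,dx=\om\|\fy\|_{L^2}^2$ one has $E_\om(\fy(\cdot/t))=\|\na\fy\|_{L^2}^2$ for \emph{every} $t$. The path is flat: it never reaches $0$, never dips below the zero level, and has no maximum at $t=1$, so it is not a mountain-pass path. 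The same error recurs in the non-existence argument where you again invoke ``the scaling $t\mapsto\psi(\cdot/t)$'' to pin the MP level to $k_\om(g)$. The Ambrosetti--Rabinowitz geometry must instead use the amplitude fibering $t\mapsto t\fy$, whose first variation at $t=1$ gives the Nehari condition, with the strict super-quadraticity $sg'(s)\ge(1+\e)g(s)$ furnishing the unique interior maximum and $E_\om(t\fy)\to-\I$ as $t\to\I$. Having fixed the fibering one must still show that the MP level equals the Nehari minimum and, via the Pohozaev identity, equals $k_\om(g)$ after rescaling---the link you correctly flag as ``the technical heart'' but defer. As written, the proof has a hole precisely where an argument is needed: the mechanism you invoke to produce the mountain-pass path does not function in $d=2$.
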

Here a mountain-pass critical point is any $\fy\in H^1(\R^2)$ satisfying \eqref{sNLS} and 
\EQ{
 E_\om(\fy)=\inf\{\sup_{0\le t\le 1}E_\om(\Ga(t))\mid \Ga\in C^1([0,1];H^1),\ E_\om(\Ga(0))=0>E_\om(\Ga(1))\}.}
The above implies that the optimal condition on $\be_0$ for \cite[Theorem 5]{RS} is 
\EQ{
 \be_0 > \frac{1}{(4\pi)^2\cS_\I} = \frac{e^{2\ga-2}}{16\pi^2}.}

On the other hand, Alves, Souto and Montenegro \cite{ASM} considered a positive $L^p$ ($p>2$) perturbation of the energy and derived a variational lower bound on its coefficient to ensure the existence of ground state as a minimizer of $k_\om(g)$. This is similar to the conditions \eqref{ex by small} and \eqref{nex by small} in the sense that the perturbation is more effective for smaller $|u|$, but it seems difficult to compare those different conditions. 

It is also worth noting that the critical growth $u^{-2}e^{u^2}$ had been known as a threshold for the Dirichlet problem, since the work of de Figueiredo, Miyagaki and Ruf \cite{FMR}. 
In particular, for $g(s)\sim \be_0s^{-1}e^{4\pi s}$ on the disk $\Om=D$, 
de Figueiredo and Ruf \cite{FR} proved non-existence of positive radial solution for $0<\be_0\ll 1$, while de Figueiredo, do \'O and Ruf \cite{FOR} proved existence for $\be_0>\frac{1}{e\pi}$. 
The relation to the $\R^2$ case is yet to be investigated. 

The main strategy to prove Theorem \ref{thm:main} follows the idea of Carleson and Chang \cite{CC}. If a maximizing sequence is concentrating for loss of compactness, then we can evaluate its limit for $\cR(g,u)$. If there is some function exceeding the limit, then the maximum is attained. If all the functions under the condition have less $\cR(g,u)$ than the limit, then the maximum is not attained. 

The latter is more difficult since we need to consider all the candidates. 
So we introduce the cut-off $\el_L$ to large $L\gg 1$, which forces the candidates to concentrate, since otherwise their contribution for the cut-off function is too small.
In order to study the concentrating behavior in detail, the idea is to split the space (radial) region into the central part and the tail part, by partition of the kinetic energy into halves: 
\EQ{
 \|\na u\|_{L^2(|x|<R)}^2 = \|\na u\|_{L^2(|x|>R)}^2=2\pi,}
for any candidate $u$ which is radially decreasing (by the rearrangement). 
Assuming that $L\gg 1$ and $\cR(g,u)\gec 1$, we see that the nonlinear integral is negligible in the tail region $|x|>R$, while the $L^2$ mass is negligible in the central region $|x|<R$, so that we can decompose the maximization problem into two independent ones in those regions. 

Our asymptotic analysis of those two problems are parametrized by the ``initial data" at the middle radius, namely, 
\EQ{
 H:=u(R), \pq a:=x\cdot\na u(R).}
The main novelty is to consider the maximization for each fixed $H\gg 1$, which is a subcritical problem with respect to the Trudinger-Moser inequality, and to analyze the asymptotic behavior as $H\to\I$. 
This argument covers all possible candidates for maximizer, so that we can conclude the non-existence in some cases, after the cut-off forcing $H\gg 1$. 
For the existence proof in the literature, it was enough to consider only some particular $u$ for which $\cR(g,u)$ exceeds the concentration limit, as in \cite{CC,FOR,Ruf}. 
The non-existence proof of Mancini and Thizy \cite{MT} for \eqref{AD} is by a contradiction argument, using asymptotic analysis similar to \cite{MM} for critical points, with respect to the built-in parameter $\al\to\la_1-0$. 
The non-existence proof by Thizy in \cite{T} is by asymptotic analysis on maximizers for subcritical energy in the critical energy limit. 
The advantage of our asymptotic analysis seems that it can easily be linked to the nonlinear growth very precisely.

In the tail region $|x|>R$, the maximization problem becomes linear, which is to derive a sharp form of the exponential radial Sobolev inequality: for any radial $u\in H^1(\R^2)$ and any $R>0$, 
\EQ{ \label{exp radS}
 \|\na u\|_{L^2(|x|>R)}^2\le 4\pi,\ |u(R)|\ge 1 \implies \frac{e^{u(R)^2}}{u(R)^2} \lec \int_{|x|>R}\frac{|u|^2}{R^2}dx,}
proven in \cite{TMX}. The sharp form is the following.
\begin{lem} \label{lem:radSob}
There is an increasing function $\mu:[0,\I)\to[0,\I)$ such that for any radial $u(|x|)=u(x)\in H^1(\R^2)$ and any $R>0$ we have 
\EQ{ \label{sharp radS}
 \mu\left(\frac{4\pi u(R)^2}{\|\na u\|_{L^2(|x|>R)}^2}\right) \le \frac{\|u\|_{L^2(|x|>R)}^2}{R^2\|\na u\|_{L^2(|x|>R)}^2}.}
Moreover, for each $R>0$, there is some $u$ for which the equality holds. In other words, $\mu$ is the optimal (maximal) function for the above inequality. It has the following asymptotic behavior:
\EQ{ \label{exp mu}
 \mu(s)=\CAS{\frac{e^s}{4s}e^{2\ga-1}\BR{1-s^{-1}-\tfrac12s^{-2}+O(s^{-3})}^{-1} &(s\to\I),\\ \frac{1}{16}s^2+O(s^3) &(s\to+0).} } 
\end{lem}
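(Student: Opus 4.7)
The plan is first to pass to the logarithmic variable $t=\log(|x|/R)$, setting $v(t):=u(Re^t)$. A direct calculation yields
\[
\|\na u\|_{L^2(|x|>R)}^2=2\pi\int_0^\infty v'(t)^2\,dt,\qquad
\|u\|_{L^2(|x|>R)}^2=2\pi R^2\int_0^\infty v(t)^2e^{2t}\,dt,
\]
so \eqref{sharp radS} becomes the $R$-independent statement $\mu(2v(0)^2/K)\le N/K$ with $K:=\int_0^\infty v'^2\,dt$ and $N:=\int_0^\infty v^2e^{2t}\,dt$. The optimal $\mu$ is therefore
\[
\mu(s):=\sup\bigl\{N(v)/K(v):v(\infty)=0,\ 2v(0)^2/K(v)=s\bigr\},
\]
and I need to verify this sup is finite, attained, and increasing in $s$. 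Since $N/K$ and $v(0)^2/K$ are invariant under $v\mapsto\la v$, I normalize $K=1$, $v(0)=\sqrt{s/2}$. The associated Euler-Lagrange equation (with Lagrange multiplier written $-a^{-2}$, $a>0$) reduces, via $\sigma=ae^t$, to the modified Bessel equation of order zero $v_{\sigma\sigma}+\sigma^{-1}v_\sigma=v$; its unique decaying solution $K_0(\sigma)$ gives the extremizer $v_a(t)=c\,K_0(ae^t)$.

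\textbf{Parametric representation.} Using $K_0'=-K_1$ and $K_1'=-K_0-K_1/\sigma$, one checks
\[
\tfrac{d}{d\sigma}[\sigma K_0K_1]=-\sigma(K_0^2+K_1^2),\qquad \tfrac{d}{d\sigma}[\sigma^2(K_1^2-K_0^2)]=-2\sigma K_0^2,
\]
which integrate to closed forms for $\int_a^\infty\sigma K_0(\sigma)^2\,d\sigma$ and $\int_a^\infty\sigma K_1(\sigma)^2\,d\sigma$. Substituting $\sigma=ae^t$ in $N(v_a)$ and $K(v_a)$ then yields
\[
s(a)=\frac{4K_0(a)^2}{D(a)},\qquad \mu(s(a))=\frac{K_1(a)^2-K_0(a)^2}{D(a)},\qquad D(a):=2aK_0(a)K_1(a)-a^2[K_1(a)^2-K_0(a)^2].
\]
A direct differentiation gives $D'(a)=-2aK_1(a)^2<0$, and a short check then gives $s'(a)<0$, so $a\mapsto s$ is a bijection $(0,\infty)\to(0,\infty)$ and $\mu$ is strictly increasing in $s$. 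Equality in \eqref{sharp radS} for each $R>0$ is immediate from this explicit one-parameter family of extremizers.

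\textbf{Asymptotic expansions.} The regime $s\to\infty$ corresponds to $a\to 0$. Setting $L:=-\log(a/2)-\ga$ (so $a=2e^{-\ga-L}$), the standard small-argument power series
\[
K_0(a)=L+\tfrac{a^2}{4}(L+1)+O(a^4L),\qquad K_1(a)=a^{-1}-\tfrac{aL}{2}-\tfrac{a}{4}+O(a^3L)
\]
give $D(a)=2L-1+O(a^2L)$ and $K_1(a)^2-K_0(a)^2=a^{-2}-L^2-L-\tfrac12+O(a^2L^2)$, hence $s(a)=2L+1+(2L-1)^{-1}+O(a^2L^2)$. Inverting the $L\leftrightarrow s$ relation to $L=(s-1)/2-1/(2s)-1/s^2+O(s^{-3})$ and combining with $\mu(a)\sim a^{-2}/(2L-1)=e^{2L+2\ga}/(4(2L-1))$ yields the claimed large-$s$ expansion. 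For $s\to 0$, equivalently $a\to\infty$, the large-argument asymptotic $K_n(a)=\sqrt{\pi/(2a)}\,e^{-a}[1+(4n^2-1)/(8a)+O(1/a^2)]$ gives $K_1(a)^2-K_0(a)^2\sim(\pi/(2a^2))e^{-2a}$ and $D(a)\sim(\pi/2)e^{-2a}$, whence $s(a)\sim 4/a$ and $\mu(a)\sim a^{-2}\sim s^2/16$.

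\textbf{Main obstacle.} The delicate point is the $-\tfrac12 s^{-2}$ coefficient in the large-$s$ expansion: the leading term $a^{-2}$ in $K_1^2-K_0^2$ is exponentially large in $L$ while the polynomial-in-$L$ corrections are small, yet the logarithmic relation $L\sim(s-1)/2$ mixes these two scales, so one must carry the small-$a$ Bessel expansions to the $O(a^2L)$ level and track the inversion $L\leftrightarrow s$ to order $s^{-3}$ to pin down the correct coefficient. A subsidiary issue is verifying that the supremum is actually attained, which follows either from the explicit family $\{v_a\}_{a>0}$ exhausting all $s>0$ by monotonicity, or from a direct weak-compactness argument using that the weight $e^{2t}$ makes $N$ weakly continuous on $\{v:K(v)\le 1,\,v(0)=\sqrt{s/2},\,v(\infty)=0\}$.
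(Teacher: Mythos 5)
Your proof is essentially the same as the paper's: both reduce to a one-dimensional minimization in the exterior region, identify the Euler--Lagrange extremizer as a rescaled modified Bessel function $K_0$ (the paper calls it the Bessel potential $G$ of $-\De+1$), use the energy and Pokhozaev identities (your two differential identities for $\sigma K_0K_1$ and $\sigma^2(K_1^2-K_0^2)$) to obtain closed formulas for $s(a)$ and $\mu$, and then invert the logarithmic relation between $a$ and $s$ using the small- and large-argument expansions of $K_0,K_1$ --- the paper just derives these from the Laplace-transform integral representation of $G$ instead of quoting the series. Two small points deserve tightening: the optimal $\mu$ in your 1D reformulation is an infimum of $N/K$, not a supremum (so ``finiteness'' is trivial; what needs proof is positivity and attainment), and attainment of the minimizer --- needed before you may legitimately apply Euler--Lagrange --- is handled in the paper by a weak-compactness argument plus a dilation $\psi(t,r)=\fy(r+t(r-1))$ that rules out $K_{(1,\I)}(\fy)<1$ for the weak limit, a step your ``weak continuity of $N$'' remark glosses over.
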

This asymptotic formula of the optimal bound may have interest independent of the Trudinger-Moser inequality. The optimizer $u$ is given by rescaling the Green function of $-\De+1$ on $\R^2$. 
It is also worth noting that the exponential radial Sobolev \eqref{exp radS} follows immediately from the Trudinger-Moser inequality of the exact growth \eqref{TM cond B}, but the sharp asymptotic formula is not easily transferred from the latter to the former. 

In the central region $|x|<R$, we consider maximization of $\cR(g,u)$ for the half energy therein, ignoring the $L^2$ mass, and assuming that the height $H=u(R)$ is optimized by the half energy in the tail $|x|>R$. Using the Euler-Lagrange equation, we see that $-|x|^2\De u$ is approximated by a soliton in the logarithmic coordinate $t=\log(R/|x|)$. The approximating equation and soliton are respectively 
\EQ{
 \ddot u = \frac 2a\dot u(\dot u-a) = \frac a2 \sech^2(t-T_a),}
where the central position $T_a\approx H^2$ is chosen such that $2\dot u(T_a)=a\approx 1/H$. 
The soliton approximation was used already by Carleson and Chang \cite[(18)]{CC}, though in a different scaling. 
The nonlinear integral $\cR(g,u)$ is mostly around $u=2H\approx \|u\|_{L^\I}$, which makes it easy to treat lower order perturbation. Thus it suffices to consider the asymptotic expansions only for the threshold nonlinearity. 
The soliton approximation is enough to obtain the second order expansion, which turns out to be vanishing for $g(s)=s^{-1}\el_L^s$. 
The next order expansion is obtained by using linearization around the soliton. 

The rest of paper is organized as follows. In the next section, we introduce some notation. 
In Section \ref{sect:reduc}, the maximization problem is reduced to an asymptotic formula with the  concentration parameter $H=u(R)$ for $s^{-1}\el_H^s$. 
The asymptotic formula is derived up to the second order in Section \ref{sect:loc int} by using the soliton approximation, and then to the third order in Section \ref{sect:next exp} by using linearization around the soliton. 
In Section \ref{sect:exp Sob}, we prove the asymptotic formula for the exponential radial Sobolev, namely Lemma \ref{lem:radSob}. 
The same analysis on the disk is sketched in Section \ref{sect:disk}. 
Appendix \ref{app:exp int} gathers some explicit integral formulas used in Sections \ref{sect:loc int}--\ref{sect:next exp}, while Appendix \ref{app:asy exp} summarizes the asymptotic expansions in those sections. 
Finally in Appendix \ref{app:R2 2 D}, we derive the sharp Trudinger-Moser inequality on the disk $D$ from the exact version on $\R^2$. 

\section*{Acknowledgements}
S.~I.~is partially supported by {NSERC} Discovery
grant \# {371637-2014}. N.~M.~is in part supported by
{NSF} grant {DMS-1211806}.
K.~N.~was supported by JSPS KAKENHI Grant Number 25400159 and JP17H02854.

\section{Notation}
For any radial function $u=u(r):[0,\I)\to\R$, $g:[0,\I)\to\R$, and $I\subset(0,\I)$,  the radial nonlinear energy, kinetic energy and mass are denoted respectively by 
\EQ{
 \{g\}_{I}(u):=\int_I g(u^2)rdr, \pq K_{I}(u):=\int_I |u'|^2rdr, \pq M_{I}(u):=\int_I|u|^2rdr.}
The subscript $I$ is omitted when $I=(0,\I)$, namely on the entire space. 

For a parameter $h\in\R$, we will denote any bounded and exponentially decaying (as $h\to\I$) quantities by $\ox_h$. 
More precisely, for any function $\al:\R\to\R$, $\al(h)=\ox_h$ means that  
\EQ{
 \al(h) \le Ce^{-\max(h/C,0)}}
for some constant $C>0$ which could be written explicitly, but may vary from place to place. 
If the parameter $h$ is restricted to some range $I\subset\R$, such as $h\ge H$ for some $H\in\R$, then the above inequality is also assumed on the restricted range of $h\in I$. 
Also, we will often replace such $\al(h)$ with $\ox_h$ inside various expressions, in the same way as Landau's symbol $O$.

\section{Reduction to the concentrating half energy}  \label{sect:reduc}
Here we reduce the maximizing problem to the very critical case, namely
\EQ{ \label{def f*}
 g^*_L(s):=s^{-1}\el_L^se^{-c_Es^{-2}} = \CAS{ s^{-1}e^{s}\BR{1-c_Es^{-2}+O(s^{-3})} &(s> L^2),\\ 0 &(s\le L^2),}}
for $L\gg 1$. The second order term $c_Es^{-2}$ has no effect in this or next section. 
The constant $c_E$ is chosen for cancellation in some asymptotic expansion in Section \ref{sect:next exp}. 

First, \eqref{cond f} implies \eqref{TM cond B}, so $\cS(g)<\I$. 
The symmetric rearrangement allows us to restrict $u\in H^1(\R^2)$ to radial decreasing functions. 
Moreover, thanks to the scaling invariance of $\dot H^1(\R^2)$, we may normalize the $L^2$ norm. In other words, 
\EQ{
 \cS(g)\pt=\sup_{u\in X}\{g\}(u), 
 \prq X:=\{u:(0,\I)\to[0,\I)\mid u'\le 0,\ K(u)\le 2,\ M(u)\le 1\}, }
and this reduction does not change the attainability. 
Note that $K(u)\le 2$ is equivalent to the energy constraint $\|\na u\|_{L^2(\R^2)}^2\le 4\pi$.

Now let $L\ge 4$ large enough such that $g^*_L(s)\sim s^{-1}\el_L^s$. 
For any $u\in X$, define $S,\de,R,H\in[0,\I]$ by 
\EQ{ \label{def param}
 \pt S=S(L,u):=\inf\{r>0 \mid u(r)\le L\},
 \pq \de=\de(L,u):=K_{(S,\I)}(u),
 \pr R=R(u):=\inf\{r>0 \mid K_{(r,\I)}(u)\le 1\}, \pq H=H(u):=u(R).}
If $S=0$, then $\el_L^{u^2}\equiv 0$. 
Otherwise we have $0<S<\I$, $u(S)=L$, $u(r)>L$ for $0<r<S$, and $\de\in(0,1]$. 
If $R=0$, then $K(u)\le 1$ and $\sqrt{2}u\in X$, so taking any $\te\in(0,1)$ and using the subcritical Trudinger-Moser, 
\EQ{
 \{\el_L^s\}(u) \le e^{-L^2/3} \{\el_L^{2s/3}\}(\sqrt{2} u) \le \ox_L \cS(\el_L^{2s/3}) = \ox_L.}
Otherwise we have $0<R<\I$, $K_{(R,\I)}=1\ge K_{(0,R)}(u)$, and $0<H<u(r)$ for $0<r<R$. 
Henceforth we assume that $0<S,R,H<\I$. 

Lemma \ref{lem:radSob} yields
\EQ{ \label{bd S 0}
 S^2 \lec (L/\de)^2e^{-2L^2/\de} = L^{-2} (L^2/\de)^2 e^{-2L^2/\de}.}
Hence using $L\ge 4$ and $2\ge \de>0$, we obtain 
\EQ{ \label{bd S}
 \log(1/S) \ge \frac{L^2}{C_1\de}}
for some absolute constant $C_1\in[1,\I)$. 
On the other hand, by the Schwarz inequality, we have 
\EQ{ \label{Schw}
 0<r<s \implies u(r)\le u(s)+\int_s^ru_rdr \le u(s)+\sqrt{K_{(r,s)}(u)\log(s/r)}.}

Let us start with the easy case $L\ge 4H$, where $u$ is spread. Let 
\EQ{
 h:=\max(1,H), \pq \ro:=\inf\{r>0\mid u(r)\le h\}.} 
Then $L\ge 4h$, $S<\ro\le R<\I$, $u(\ro)=h$. Lemma \ref{lem:radSob} implies 
\EQ{
 \ro^2 \le \frac{M_{(\ro,\I)}(u)}{K_{(\ro,\I)}(u)\mu(h^2)} \lec h^2e^{-h^2} \le 1,}
and \eqref{Schw} with $K_{(0,\ro)}(u)\le 1$ implies for $0<r<S$ 
\EQ{
 \Br{\frac34u(r)}^2 \le (u(r)-h)^2 \le \log(\ro/r),}
hence 
\EQ{ \label{small H int}
 \{\el_L^s\}(u) \le \int_0^S e^{u^2}rdr
  \le  e^{-\frac{L^2}{16}}\int_0^S (\ro/r)^{\frac{17}{9}}rdr  
  \lec e^{-\frac{L^2}{16}}\ro^2 \le \ox_L.}
Therefore,
\EQ{ \label{H is big}
 L \gg 1 \tand \{\el_L^s\}(u)\gec 1 \implies H>L/4 \ge 1.}

If $H\ge 1$, then Lemma \ref{lem:radSob} yields $R \lec He^{-H^2}$. 
Hence \eqref{Schw} implies 
\EQ{ \label{int mass}
 M_{(0,R)}(u) \lec \int_0^R (H^2+\log(R/r))rdr \lec R^2(H^2+1)=\ox_H.} 
Let $R':=\min(S,R)$ and $c:=R'/(He^{-H^2})$. Then we have, using \eqref{Schw},
\EQ{
 0<r<R' \implies H^2 \le u(r)^2 \le 2H^2 + 2\log(R'/r) = 2\log(c H/r) ,}
hence
\EQ{ \label{int f bd}
 \{g_L^*\}_{(0,R)}(u) \le \int_0^{R'} \frac{(c H)^2}{4r^2\log^2(c H/r)}rdr =\frac{(c H)^2}{4\log(c H/R')}=\frac{c^2}{4}.}

If $H\le L$, then $\{g_L^*\}_{(0,R)}(u)=\{g_L^*\}(u)$. 
If $H>L$, then $R<S$, $\de<1$, and by \eqref{Schw} we have 
\EQ{ \label{H-L bd}
 (H-L)^2 \le (1-\de)\log(S/R).}
For $R<r<S$, using the elementary inequality
\EQ{
 a,b \in\R \implies (a+b)^2 \le a^2+b^2+\frac{a^2}{1-\de}+(1-\de)b^2=\frac{2-\de}{1-\de}a^2+(2-\de)b^2,}
we obtain
\EQ{
 u(r)^2 \le \frac{2-\de}{1-\de}(u(r)-L)^2 + (2-\de)L^2
 \le (2-\de)\log(S/r)+(2-\de)L^2,}
and so, 
\EQ{ \label{ext f bd}
 \{\el_L^s\}_{(R,\I)}(u)
 \pn= \int_R^S e^{u^2}rdr
 \pt\le \int_R^S (S/r)^{2-\de}e^{(2-\de)L^2}rdr 
 \pr\le \frac{S^2e^{(2-\de)L^2}}{\de} \lec \frac{e^{(2-\de-2/\de)L^2}L^2}{\de^3}=\ox_L,}
where \eqref{bd S 0} was used in the third inequality. 
Combining \eqref{H is big}, \eqref{int f bd} and \eqref{ext f bd}, we deduce that 
\EQ{ \label{est R}
 \pt L \gg 1 \tand \{g_L^*\}(u)\gec 1 
 \pn \implies \text{either}\CAS{L/4<H\le L,\ S\sim R \sim He^{-H^2}.\\ H>L,\ R \sim He^{-H^2}.} }
To bound $S$ in the latter case, define $\e,\ti\e>0$ and $\ka\in\R$ by  
\EQ{
 \log(1/S) = \e HL, \pq \log(1/R) = H^2+\ka, \pq \de=\frac{L}{C_1\ti\e H}.} 
Then \eqref{bd S} implies $0<\ti\e \le \e$. Injecting the above into \eqref{H-L bd} yields 
\EQ{
 (H-L)^2 \pt\le \BR{1-\frac{L}{C_1\ti\e H}}(-\e HL+H^2+\ka)
 \pr\le H^2-\BR{\frac{1}{C_1\ti\e}+\e}HL+\frac{L^2}{C_1}+\BR{1-\frac{L}{C_1\ti\e H}}\ka,}
so 
\EQ{ \label{lbd ka}
 \ka \ge \BR{\frac{1}{C_1\ti\e}+\e-2}HL.}
In the case of \eqref{est R}, $\ka$ is upper bounded, hence the above estimate implies that $\ti\e$ is bounded away from $0$, and that $\e$ is bounded from above. Hence $\e\sim\ti\e\sim 1$, in other words,
\EQ{
 \log(1/S) \sim HL, \pq \de\sim L/H.} 
Then \eqref{ext f bd} is improved to
\EQ{
 \{\el_L^s\}_{(R,\I)}(u)=\ox_H.}

Next we investigate the higher part of $u$. By the change of variable $r=Re^{-t}$, we have in the case \eqref{est R},
\EQ{
 \{g_H^*\}(u) \le \int_0^\I \frac{e^{u^2}}{u^2}R^2e^{-2t}dt \sim \int_0^\I \frac{H^2}{u^2}e^{u^2-2H^2-2t}dt
	 \le \int_0^\I e^{u^2-2H^2-2t}dt.}
where the exponent can be rewritten as  
\EQ{
  u^2-2H^2-2t = -(u-2H)^2-2\BR{t-(u-H)^2}.}
Let $\fy(x)=u(Rx)-H$ for $|x|<1$. Then $\fy\in H^1_0(D)$ and $\|\na\fy\|_{L^2}^2=2\pi K_{(0,R)}(u)\le 2\pi$, for which the original Trudinger-Moser \eqref{oTM} reads 
\EQ{
 \int_0^\I e^{2(u-H)^2-2t}dt = \frac{1}{2\pi}\int_{D} e^{2\fy^2}dx \le C.}
Hence we have uniformly for $N>0$
\EQ{
 \{g_H^*\}(u1_{|u-2H|^2>N\log H}) \lec \int_{|u-2H|^2>N\log H}e^{-(u-2H)^2+2(u-H)^2-2t}dt
 \lec H^{-N}.}
In other words, we may restrict $u$ to $2H+O(\sqrt{N\log H})$: 
\EQ{
 \{g_H^*\}(u) = \{g_H^*\}(u1_{|u-2H|^2<N\log H}) + O(H^{-N}).}
In particular, $L<2H+1$ for large $L$. 
 
The above estimates are all concerned about the contribution in $|u|\ge L\gg 1$ and so the concentration part $|x|\ll 1$. 
To see the tail part, let $H\ge 1$ and 
\EQ{
 u(R_0)=1/H} 
at some $R_0>0$. Then 
\EQ{
 1=M(u) \ge M_{(0,R_0)}(u) \ge R_0^2/H^2}
implies $R_0\le H$, while \eqref{Schw} implies 
\EQ{
  \log(R_0/R) \ge (H-1/H)^2 \ge H^2-2,}
so in the case of \eqref{est R}, we have 
\EQ{ \label{tail R}
 R_0 \ge Re^{H^2-2} \sim H.}

Summarizing the above arguments, we have 
\begin{lem} \label{lem:redu}
Let $g^*_L$ be the critical function defined by \eqref{def f*}. 
For any $\e,N^*>0$, there exists $L_*\ge 1$ such that for any $L\ge L_*$ we have the following. For any $u\in X$ satisfying $\{g_L^*\}(u)\ge\e$, the parameters $S,\de,R,H$ defined by \eqref{def param} satisfy $0<S,R,H<\I$, $0<\de\le 2$, 
$2H+1>L$, $R \sim He^{-H^2}$, $\log(1/S) \sim HL$, $\de \sim L/H$, $M_{(0,R)}(u)=\ox_H$ and 
\EQ{ \label{int g localize}
 \{g^*_L\}(u)=\int_{|u-2H|^2<N\log H}g^*_L(u) rdr + O(H^{-N})}
uniformly for $0<N\le N^*$. If $u(r)=1/H$ then $r\sim H$. 
\end{lem}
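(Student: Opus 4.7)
The plan is to collect into a single proof the estimates derived inline throughout Section \ref{sect:reduc}, organized in three stages: exclude the degenerate configurations ($S$, $R$, $H$), bootstrap to $H \gec L$, and then extract the two-sided asymptotics for $R$, $S$, $\de$ by coupling Lemma \ref{lem:radSob} with the Schwarz inequality \eqref{Schw}.

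First I would rule out $S = 0$ (which forces $\el_L^{u^2} \equiv 0$) and $R = 0$ (in which case $\sqrt{2}u$ is admissible for the subcritical Trudinger--Moser, giving $\{g_L^*\}(u) \le \ox_L$); both contradict the hypothesis $\{g_L^*\}(u) \ge \e$ once $L \ge L_*$. For $R > 0$, monotonicity gives $H = u(R) < \I$. The bootstrap $H > L/4$ follows the ``spread'' case \eqref{small H int}: assuming $L \ge 4h$ with $h := \max(1,H)$, Lemma \ref{lem:radSob} bounds the $h$-sublevel radius $\ro := \inf\{r : u(r) \le h\}$ by $\ro \lec 1$, and then \eqref{Schw} together with $(u-h)^2 \ge (3u/4)^2$ on $(0,S)$ yields $\{\el_L^s\}(u) \lec e^{-L^2/16}\ro^2 = \ox_L$, contradicting the hypothesis for $L \ge L_*$. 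Hence $H > L/4 \ge 1$.

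With $H \gec L$, Lemma \ref{lem:radSob} directly yields the upper bound $R \lec He^{-H^2}$. The matching lower bound comes from the Schwarz pointwise bound $u(r)^2 \le 2\log(cH/r)$ on $(0,R')$ with $R' = \min(S,R)$ and $c = R'/(He^{-H^2})$, which plugged into the explicit integration \eqref{int f bd} gives $\{g_L^*\}_{(0,R)}(u) \le c^2/4$; combined with the tail estimate $\{g_L^*\}_{(R,\I)}(u) = \ox_L$ from \eqref{ext f bd} and the hypothesis, this forces $c \gec 1$, i.e., $R \sim He^{-H^2}$. For the asymptotics of $S$ and $\de$ in the subcase $H > L$, I parametrize $\log(1/S) = \e HL$ and $\de = L/(C_1 \ti\e H)$; coupling the Schwarz bound \eqref{H-L bd} on the middle annulus $(R,S)$ with the Lemma \ref{lem:radSob} bound \eqref{bd S} produces the inequality \eqref{lbd ka}, and using that $\ka = \log(H/(Re^{H^2}))$ is now bounded (by $R \sim He^{-H^2}$) pins down $\e \sim \ti\e \sim 1$, giving both $\log(1/S) \sim HL$ and $\de \sim L/H$.

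The remaining assertions are short consequences of the established bounds: $M_{(0,R)}(u) = \ox_H$ from integrating the Schwarz bound as in \eqref{int mass}; the localization \eqref{int g localize} by pulling back to the unit disk via $\fy(x) = u(Rx) - H \in H^1_0(D)$ with $\|\na\fy\|_{L^2}^2 \le 2\pi$, rewriting the exponent as $-(u-2H)^2 - 2[t-(u-H)^2]$ in the logarithmic coordinate $t = \log(R/|x|)$, and applying \eqref{oTM} to $e^{2\fy^2}$ on $D$; and the tail claim $u(r) = 1/H \Rightarrow r \sim H$ from \eqref{tail R}. The main obstacle is the coupled two-sided bound on $\de$: the easy direction $\log(1/S) \gec L^2/\de$ is immediate from Lemma \ref{lem:radSob}, but the matching upper bound $\log(1/S) \lec HL$ (equivalently $\de \gec L/H$) cannot be read off Lemma \ref{lem:radSob} in isolation; it requires simultaneously using the Schwarz bound on $(R,S)$ and the already-established asymptotic $R \sim He^{-H^2}$ to turn \eqref{lbd ka} into a consistency constraint that pins down both scale parameters $\e$ and $\ti\e$.
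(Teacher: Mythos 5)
Your proposal is correct and follows the same route as the paper: the lemma is exactly the collection of the inline estimates in Section~\ref{sect:reduc}, organized in the order you describe (exclude degenerate $S,R$; bootstrap $H>L/4$ via \eqref{small H int}; two-sided bound on $R$ from Lemma~\ref{lem:radSob} and \eqref{int f bd}--\eqref{ext f bd}; the parametrization $\log(1/S)=\e HL$, $\de=L/(C_1\ti\e H)$ and the consistency constraint \eqref{lbd ka}; finally \eqref{int mass}, \eqref{tail R}, and the Trudinger--Moser localization for \eqref{int g localize}). Two points you leave implicit and should say explicitly: in the subcase $L/4<H\le L$ one has $R'=S$ (since $u(R)=H\le L=u(S)$ forces $S\le R$), so $c\gec1$ gives $S\gec He^{-H^2}$; together with $S\le R\lec He^{-H^2}$ this gives $\log(1/S)\sim H^2\sim HL$ and $\de\ge K_{(R,\I)}(u)=1\sim L/H$ directly, without the $\e,\ti\e$ parametrization which is only needed when $H>L$. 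Also, $2H+1>L$ is the claim that closes out the list and should be recorded as a consequence of \eqref{int g localize}: if $L$ exceeded $2H+O(\sqrt{N\log H})$ the cut-off $\el_L$ would kill the localized region and force $\{g^*_L\}(u)=O(H^{-N})<\e$.
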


For any $g:[0,\I)\to\R$ satisfying \eqref{cond f}, and $H>0$, let $\cS_H(g)$ be the supremum under the constraint $u(R)=H$, namely 
\EQ{ \label{def SH}
 \pt \cS_H(g):=\sup_{u\in X_H}\{g\}(u),
 \prq X_H:=\{u\in X\mid \exists R>0,\ K_{(0,R)}(u)\le 1, \pq K_{(R,\I)}(u)\le 1, \pq u(R)=H\}, }
then we have, in general, 
\EQ{
 \I>\cS(g)= \sup_{0<H<\I}\cS_H(g)>0.}
In Section \ref{sect:next exp}, we will prove

\begin{lem}
\label{lem asy SH 0}
Using the above notation, we have the asymptotic expansion
\EQ{ \label{asy SH 0}
 \cS_H(g^*_H)=\cS_\I+O(H^{-6}) \pq(H\to\I).}
\end{lem}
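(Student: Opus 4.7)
The plan is to reduce the maximization of $\{g^*_H\}$ over $X_H$ to two nearly independent one-dimensional problems, on the central region $0 < r < R$ and on the tail $r > R$, and then to match their asymptotic expansions to order $H^{-6}$.

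Lemma \ref{lem:redu} applied with $L = H$ already forces any near-maximizer $u \in X_H$ to satisfy $R \sim He^{-H^2}$, $M_{(0,R)}(u) = \ox_H$, and to have its nonlinear integral concentrated on $|u - 2H|^2 \lesssim \log H$. Hence the mass constraint binds essentially only on the tail, while the nonlinear contribution comes essentially from the central region. Passing to logarithmic coordinates $t = \log(R/r)$ with $v(t) := u(r)$, the central integral reads $\int_0^\infty g^*_H(v^2)\,R^2 e^{-2t}\,dt$ subject to $\int_0^\infty \dot v^2\,dt \le 1$ and $v(0) = H$, and its Euler--Lagrange equation reduces at leading order to a kink profile with $\ddot v = \tfrac{a}{2}\sech^2(t - T_a)$, slope $a := -R u'(R) \sim 1/H$ at $r = R$, and center $T_a \sim H^2$ chosen so that $v(\infty) \approx 2H$. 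The tail enters through Lemma \ref{lem:radSob}: writing $K_{(R,\infty)}(u) = \kappa \le 1$, the inequality $M_{(R,\infty)}(u) \ge R^2 \kappa\,\mu(2H^2/\kappa)$ combined with $M(u) \le 1$ gives an upper bound on $R^2$, and the asymptotic $\mu(s) \sim \tfrac{e^s}{4s} e^{2\gamma - 1}$ multiplying the prefactor $R^2 e^{2H^2}$ of the central integral produces the leading value $\cS_\infty = e^{2 - 2\gamma}$.

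Expanding the nonlinear integral along the soliton profile in powers of $H^{-2}$, the factor $e^{-c_E s^{-2}}$ built into $g^*_H$ is designed so that $c_E = 4 + 2\zeta(3)$ precisely cancels the surviving $H^{-4}$ contribution. The second-order analysis of Section \ref{sect:loc int}, combining the soliton ansatz in the central region with the refined expansion of $\mu$ from Lemma \ref{lem:radSob}, shows that the $H^{-2}$ coefficient in $\cS_H(g^*_H) - \cS_\infty$ vanishes identically, while the $H^{-4}$ coefficient is absorbed into the $c_E$-correction of the nonlinearity.

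Reaching the $H^{-6}$ remainder is the main obstacle and requires going beyond the soliton ansatz via the linearization of Section \ref{sect:next exp}: one perturbs the soliton by a correction $v_1$ solving a Schr\"odinger-type linear ODE with $\sech^2$ potential, inverts this operator explicitly, and integrates the resulting contribution against $g^*_H$ on top of the second-order expansion. The explicit integrals of $\sech^{2k}$ and $t\,\sech^{2k}$ collected in the appendices are where Ap\'ery's constant $\zeta(3)$ enters through the value of $c_E$, and they must be balanced between the central and tail contributions so that only the $O(H^{-6})$ error survives. Matching the upper bound, valid for an arbitrary $u \in X_H$, with the lower bound obtained by evaluating $\{g^*_H\}$ on the explicit soliton, yields the two-sided expansion $\cS_H(g^*_H) = \cS_\infty + O(H^{-6})$.
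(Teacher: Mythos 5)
Your outline follows essentially the same route as the paper: reduce to the concentrating central/tail decomposition, represent the maximizer's profile by the soliton $w_0=1-\tanh(t-T_a)$ in logarithmic coordinates, couple this to the sharp radial Sobolev expansion of $\mu$ from Lemma~\ref{lem:radSob}, and push the expansion one order past the soliton via a linearized correction. That is the correct skeleton, and the roles you assign to the cancellations at $O(H^{-2})$ and $O(H^{-4})$ (the latter absorbed by the tuned constant $c_E$) match \eqref{exp sS 0}, \eqref{exp cmu} and \eqref{asy SH next}.

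Two descriptive points are off in a way that would matter if you filled in details. First, the linearization used in Section~\ref{sect:next exp} is not a Schr\"odinger operator with $\sech^2$ potential; the paper works at the level of $\dot v$, so the correction $w_1$ solves the \emph{first-order} equation $\dot w_1+(2\tanh t_a)w_1=\cR_0$, inverted by the elementary Duhamel formula \eqref{Duh wR}, not by spectral theory of $-\p_t^2-2\sech^2$. Second, and more significantly, $\zeta(3)$ does not come from integrals of $\sech^{2k}$ or $t\,\sech^{2k}$ (those only produce rational or $\pi^2$-type constants); it arises from Lemma~\ref{zeta int}, i.e., integrals of the form $\int_\R(2-w_0)^j|v_+|^k\,dt$ with $v_+=-\log(1+e^{-2t_a})$, which after the substitution $x=\log(1+e^{2s})$ reduce to $\int_0^\I x^k e^{-(j-1)x}/(e^x-1)\,dx$ and hence to $\Gamma(k+1)\zeta(k+1)$. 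Finally, your sketch glosses over the machinery that makes the orders actually compute: the maximizer's Euler--Lagrange identities \eqref{int id}, the kinetic-energy condition \eqref{ke id}, and the initial-acceleration condition \eqref{ini acc} are all needed to pin down $a$, $T_a$, $\hat H$, $\hat T$ in terms of $H$, and a sequence of cancellations (e.g.\ $\hat H=O(a^2)$ in \eqref{exp R1=0}, the cancellation of $\int|\dot w_0|(a^2\hat T+v_R)\,dt$, and finally $c_8=-1$) is what actually collapses the $a^4$ coefficient in \eqref{asy SH next}. None of this contradicts your plan, but it is the substance of the proof rather than a routine power-series expansion.
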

Taking this lemma granted, and using the above Lemma \ref{lem:redu}, we are able to prove the main Theorem \ref{thm:main}. 

\begin{proof}[Proof of Theorem \ref{thm:main}]
Let $g\in\cG$. For $H\to\I$, \eqref{asy SH 0} implies that there exists a sequence $u_H\in X_H$ such that $\{g^*_H\}(u_H)\to\cS_\I$. For any $\e\in(0,1)$, \eqref{cond f} implies that $g\ge(1-\e)g^*_H$ for sufficiently large $H>1$. Then 
\EQ{
 \cS(g) \ge (1-\e)\limsup_{H\to\I}\cS(g^*_H) \ge (1-\e)\limsup_{H\to\I}\cS_H(g^*_H)=(1-\e)\cS_\I.}
Since $\e>0$ is arbitrary, we deduce that $\cS(g)\ge\cS_\I$. 

Take any sequence $u_n\in X$ such that $\{g\}(u_n)\to\cS(g)$. 
If there are $R,\de>0$ such that $K_{(R,\I)}(u_n)\ge\de$ for large $n$, then $K_{(0,R)}(u_n)\le 2-\de$, while $u_n(R)$ is bounded by the radial Sobolev inequality. Hence the subcritical Trudinger-Moser implies that $g(u_n)r$ is uniformly integrable on $r\in(0,R)$. The radial Sobolev together with $g(s)=o(s)$ (as $s\to+0$) implies that $g(u_n)r$ is uniformly integrable on $r\in(R,\I)$. 
Hence $\{g\}(u_n)\to\{g\}(u)$ where $u_n\to u\in X$ is the weak limit. Thus $\cS(g)=\{g\}(u)$, so $g\in\cG_M$. 

Therefore, if $g\in\cG_N$ then for any maximizing sequence $\{u_n\}\subset X$ of $\cS(g)$, we have 
\EQ{ \label{conc K-R}
 \sup_{R>0}\limsup_{n\to\I}K_{(R,\I)}(u_n)=0,}
and so $u_n\to 0$ weakly in $H^1$ and locally uniformly on $r\in(0,\I)$. 
Hence $\{g\}(\min(u_n,L))$ converges to $0$ for any $L>0$. On the other hand, \eqref{cond f} implies for any $\e\in(0,1)$ that if $L>0$ is large enough then $g(s)\le(1+\e)g^*_L(s)$ for $s>L^2$. Hence 
\EQ{
 \cS(g)=\lim_{n\to\I}\{g\}(u_n) \le (1+\e)\limsup_{n\to\I}\{g^*_L\}(u_n).}
\eqref{conc K-R} also yields a sequence $R_n\to 0$ such that $K_{(0,R_n)}(u_n)\le 1$ and $K_{(R_n,\I)}(u_n)\le 1$. Let $H_n=u_n(R_n)$. If $\{H_n\}$ is bounded, then by the same argument as above, $\{g\}(u_n)\to\{g\}(u)=0$, a contradiction. Hence $H_n\to\I$, passing to a subsequence if necessary. Then 
\EQ{
 \cS_\I \le \cS(g) \le (1+\e)\limsup_{H\to\I}\cS_H(g^*_L).}
Applying Lemma \ref{lem:radSob} to the maximizer $u$ of $\cS_H(g^*_L)$ for $H\gg L+1$, we deduce that 
\EQ{
 \cS_H(g^*_L) = \{g^*_L\}(u) = \{g^*_H\}(u) + O(H^{-1}) \le \cS_H(g^*_H) + O(H^{-1}) \to \cS_\I,}
as $H\to\I$. Since $\e>0$ is arbitrary, we deduce that $\cS(g)=\cS_\I$. 

Next we prove the existence under the condition \eqref{ex by large}. For brevity, let 
\EQ{
 \s:=\sqrt{7\log H}.}
For $H\gg 1+L$, let $u\in X_H$ be the maximizer of $\cS_H(g^*_H)$. Then by \eqref{ex by large} and \eqref{int g localize}, 
\EQ{
 \{g\}(u) \pt\ge \int_0^\I \BR{g^*_L(u)\Br{1+a u^{-2p}+aO(u^{-2p-4})+O(u^{-6})}}rdr 
 \pr\ge \{g^*_H\}(u)\BR{1+ a\Br{2H+O(\s)}^{-2p} } + O(H^{-6}).}
Injecting the asymptotic expansion \eqref{asy SH 0} into $\{g^*_H\}(u)=\cS_H(g^*_H)$ yields 
\EQ{
 \{g\}(u) \ge\cS_\I\BR{1+a(2H)^{-2p}}+O(H^{-2p-1}\s+H^{-6}) > \cS_\I}
for large $H$, if $0<p<3$ and $a>0$, or if $p=3$ and $a\ge C_*>0$ is large enough compared with the $O(H^{-6})$ error. 

Similarly, in the case of condition \eqref{ex by small}, we have 
\EQ{
 \{g\}(u) \pt\ge \int_0^\I \BR{g^*_L(u)\Br{1+ bu^{-2q} +bO(u^{-2q-4}) + O(u^{-6})}+a u^{2+2p}}rdr
 \pr \ge \cS_\I+O(H^{-6}+H^{-2q})+a\int_0^\I u^{2+2p}rdr. }
Let $R_0>0$ such that $u(R_0)=H^{-1}$. Then $R_0\sim H$ and the last term is bigger than 
\EQ{
 a \int_0^{R_0}u^{2+2p} rdr \ge a R_0^2 H^{-2-2p}/2 \sim a H^{-2p}.}
Since $q>p$, this term dominates the error terms as $H\to\I$ provided that $a>0$ with $p<3$ or $a\ge C_*$ with $p=3$. 
Hence $\{g\}(u)>\cS_\I$ for large $H$, and so $g\in\cG_M$. 

Next we prove the non-existence under the condition \eqref{nex by large}.
Let $L_*\ge 1$ be given by Lemma \ref{lem:redu} for $\e:=\cS_\I$. 
Fix the parameters $p,q,a,b$ and assume \eqref{nex by large} for some $L\ge L_*$ to be taken large. 
Suppose that $u\in X$ and $\{g\}(u)\ge\cS_\I$, so that Lemma \ref{lem:redu} applies to $u$. 
If $L\ge L_*$ is large enough, then $H>L/2-1\gg 1$ and  
\EQ{
 \{g\}(u) \pt\le \int_0^\I g^*_L(u)\BR{1-a u^{-2p} + aO(u^{-2p-4}) + O(u^{-6})}rdr
 \pr\le \{g^*_H\}(u)\BR{1 - a\Br{2H+O(\s)}^{-2p} } + O(H^{-6})
 \pr\le \cS_H(g^*_H)\BR{1 - a\Br{2H+O(\s)}^{-2p} } + O(H^{-6}).}
Injecting the asymptotic expansion \eqref{asy SH 0} yields
\EQ{
 \{g\}(u) \le  \cS_\I\BR{1-a(2H)^{-2p}}+O(H^{-6}+H^{-2p-1}\s)<\cS_\I}
for large $H$, by the same comparison between $aH^{-2p}$ and the error terms as in the case \eqref{ex by large}. Hence $g\in\cG_N$ if $L$ is large enough. 

Similarly, in the case of \eqref{nex by small}, we have some constant $C>0$ such that
\EQ{
 \{g\}(u) \pt \le \cS_\I+O(H^{-6}+H^{-2q}) - a \int_0^\I u^{2+2p} rdr
 \pr \le \cS_\I - a C H^{-2p} + O(H^{-6}+H^{-2q}) <\cS_\I,}
for $L\ge L_*$ large enough, so $g\in\cG_N$. 
\end{proof}

\section{First expansion of the concentrating energy} \label{sect:loc int}
In this and the next sections, we consider asymptotic expansion of $\cS_H(g^*_H)$ as $H\to\I$, where $g^*_H$ was defined in \eqref{def f*}, and $\cS_H$ was introduced in \eqref{def SH}. 
We restrict $H$ to $H\gg 1$ so that $g^*_H(s)\sim s^{-1}\el_H^s$. 
Let $u\in X_H$. 
By the change of variables $r=Re^{-t}$ and $u(r)=H+v(t)$, we have 
\EQ{
 \pt \{g_H^*\}(u) = \frac{R^2}{H^2e^{-2H^2}}F_H(v), \pq F_H(v):=\int_0^\I \frac{e^{-(v-H)^2-2(t-v^2)-c_Eu^{-4}}}{(1+v/H)^2} dt,}
while $K_{(0,R)}(u)\le 1$ and $u(R)=H$ are respectively rewritten as 
\EQ{ \label{cond v}
 \int_0^\I |\dot v|^2 dt \le 1, \pq v(0)=0.}
On the other hand, for any increasing function $\ti v:[0,\I)\to[0,\I)$ satisfying the above conditions, let 
\EQ{
 \ti u(r):=\CAS{u(r) &(r\ge R)\\ \ti v(\log(R/r))+H &(0<r<R).}}
Then $\ti u$ is decreasing and $K_{(0,R)}(\ti u)\le 1$, hence by the same argument as for \eqref{int mass}, 
\EQ{
 M(\ti u)=M_{(0,R)}(\ti u)+M_{(R,\I)}(u)=M(u)+\ox_H \le 1+\ox_H,}
so there is $\la=1+\ox_H$ such that $\ti u(\la r)\in X$. Then 
\EQ{
 \frac{R^2}{H^2e^{-2H^2}}F_H(\ti v)=\{g^*_H\}(\ti u) \le \la^2\cS_H(g^*_H)=(1+\ox_H)\cS_H(g^*_H).} 
Hence, ignoring the error of $\ox_H$, $F_H(v)$ is maximized over all $v$ satisfying \eqref{cond v}, which is independent of $R$. 
Then we can maximize $R$ under the condition $K_{(R,\I)}(u)\le 1$, $M_{(R,\I)}(u)=1+\ox_H$ and $u(R)=H$, which is independent of $v$.  
It is equivalent to optimizing $\mu$ in Lemma \ref{lem:radSob}, hence the maximal $R$ is given by 
\EQ{
 R^2 = \frac{1+\ox_H}{\mu(2H^2)}=H^2e^{-2H^2}\ck\mu(2H^2)(1+\ox_H),}
where the function $\ck\mu:(0,\I)\to(0,\I)$ is defined by
\EQ{ \label{def muhat}
 \ck\mu(s):=\frac{2s^{-1}\el_1^s+s^2}{\mu(s)},}
so that $\ck\mu(s)\sim 1$ uniformly for $s>0$. 
Thus we obtain 
\EQ{ \label{SH 2 S0}
 \cS_H(g^*_H)\pt=\ck\mu(2H^2)\sS_0(H)+\ox_H, 
 \pr \sS_0(H):=\sup\Br{F_H(v) \Bigm| \int_0^\I|\dot v|^2dt=1,\ v(0)=0}.}
Putting $u=v+H$ and 
\EQ{
 \y(v,t)\pt:=u^2-2H^2-2t-2\log(1+v/H)-c_Eu^{-4}
 \pr=-2(t-v^2)-(v-H)^2-2\log(1+v/H)-c_Eu^{-4},} 
the maximized integral can be written in an exponential form
\EQ{
 F_H(v) = \int_0^\I e^{\y(v,t)}dt.}
The lower order term $c_Eu^{-4}$ in $\y$ has no role or even no effect in this section. 
It will affect the expansion only near the end of the next section. 
Actually, the role of $\log(1+v/H)=\log(u/H)$ is also small in this section.

Let $H\gg 1$ and let $v$ be a maximizer of $\sS_0(H)$. Then there is a Lagrange multiplier $\la\ge 0$ such that on $t>0$,
\EQ{ \label{eq v 0}
 \p_v e^{\y} = -\la\ddot v.}
Since $\y_v=\y_u\ge 2u-2/u>0$, the case $\la=0$ is precluded. 
Hence $\dot v$ is decreasing to $0$ (to be in $L^2(0,\I)$), so $v$ is increasing and concave.
Inner product of the above equation with $\dot v$, $1$, and $v$ respectively yields 
\EQ{ \label{int id}
 \pt \frac{\la a^2}{2} = 2\int_0^\I e^{\y} dt - e^{\y(0)}, \pq 
  \la a = \int_0^\I \p_v e^{\y}dt,
 \pq \la = \int_0^\I v\p_v e^{\y}dt,}
where the initial velocity denoted by 
\EQ{
 a:=\dot v(0)}
will be the central parameter in the following asymptotic analysis. 

Next consider the boundedness and integrability of $e^{\y}$. The Schwarz inequality \eqref{Schw} implies $t\ge v^2$. 
Moreover, the original Trudinger-Moser \eqref{oTM} on the disk yields
\EQ{ \label{TM bd}
 \int_0^\I e^{2(v^2-t)}dt \lec 1.}
Hence the $j$-th moment for $v$ around $H$, denoted by 
\EQ{
 I_j:=\int_0^\I(v-H)^j e^{\y}dt,}
 is bounded for each $j\ge 0$. On the other hand, the loss of compactness implies 
\EQ{ \label{conc}
 \liminf_{H\to\I}I_0>0.}

Using the moments, the integral identities \eqref{int id} are expanded as follows. 
Let $v':=v-H$. Since $\y_v=2u+O(1/u)$,  we have 
\EQ{
 \pt \y_v=2(2H+v')+O(H^{-1}),
 \pr v\y_v=2(H+v')(2H+v')+O(1)=4H^2+6Hv'+2(v')^2+O(1).}
Plugging them into \eqref{int id} yields
\EQ{
 \pt \la a^2  = 4I_0-\ox_H, 
 \pq \la a = 4HI_0+2I_1+O(H^{-1}),
 \pq \la = 4H^2I_0 + 6HI_1 + O(1).}
Dividing the second and third identities by the first, we obtain 
\EQ{
 \frac1a = H+\frac{I_1}{2I_0}+O(H^{-1}), \pq \frac{1}{a^2}=H^2+\frac{3I_1}{2I_0}H+O(1).}
Hence $|I_1|\lec 1/H$ and 
\EQ{ \label{ex a la}
 a = \frac{1}{H}+O(H^{-3}), \pq \la = 4H^2I_0 + O(1).}

\subsection{Exponential behavior}
Let $\te$ be the phase in the equation \eqref{eq v 0}, namely 
\EQ{ \label{def te}
 \te:=\y + \log\frac{\y_v}{\la}.}
Then the Euler-Lagrange equation is rewritten as 
\EQ{ \label{eq v}
 -\ddot v = e^{\te} = \frac{(e^{\y})_v}{\la} \lec \frac{u}{H^2}e^{2(v^2-t)-(v-H)^2},}
where $\la\sim H^2$ by \eqref{ex a la} was used. Since $v^2\le t$, we obtain 
\EQ{
 0<-\ddot v \lec H^{-1}. }
The phase $\te$ satisfies 
\EQ{ \label{te deri}
 \te_v= 2u+O(1/u), \pq \dot\te=\te_v\dot v-2.}
Hence, integrating $\dddot v=\ddot v(\te_v\dot v-2)$ from $t=\I$ yields
\EQ{ \label{eq w}
 \ddot v = -2\dot v - \int_t^\I \te_v \dot v \ddot v dt.}
For the last integrand, the bound on $|\ddot v|$ in \eqref{eq v} yields 
\EQ{
 0<-\te_v \ddot v \lec \frac{u^2}{H^2} e^{2(v^2-t)-(v-H)^2}.}
Hence there is an absolute constant $B_0>0$ such that as soon as $v(t)\ge H+B_0$ then by Trudinger-Moser \eqref{TM bd}
\EQ{
 \int_t^\I |\te_v \ddot v| dt \le 1.}
Suppose that $v(t_0)=H+B_0$ for some $t_0>0$. 
Then using the above together with the monotonicity of $\dot v$ in \eqref{eq w}, we deduce 
\EQ{
 t_0<t \implies -\ddot v \ge \dot v \pt\implies \dot v \le \dot v(t_0)e^{-t+t_0} 
 \pr\implies v \le v(t_0)+\dot v(t_0) \le H+B_0+a.}
Thus we obtain a priori bound for some absolute constant $B_1>0$
\EQ{ \label{upb v}
 0<\forall t<\I, \pq v(t)\le \lim_{t\to\I}v(t)=:v(\I) \le H+B_1.}

Let $T_*>0$ such that 
\EQ{
 v(T_*)=\frac23 H.}
Then $v^2\le t$ and \eqref{TM bd} applied to \eqref{eq v} imply, due to the factor $e^{-(v-H)^2}$, 
\EQ{ \label{asy T*}
 0<t\le T_* \implies \ddot v=\ox_H, \pq \dot v=a-\ox_H, \pq v=at-\ox_H,}
together with $T_*=\frac{2H}{3a}+\ox_H = \frac{2H^2}{3}+O(1)$, using \eqref{ex a la}.  

For any $k\in(0,1)$, there exists a unique $\dot T_k>0$ such that 
\EQ{
 \dot v(\dot T_k)=ka,}
since $\dot v(t)$ is strictly decreasing to $0$.
Using $\dot\te=(2u+O(H^{-1}))\dot v-2$, $a=1/H+O(H^{-3})$, and $\frac23H\le v\le H+B_1$ for $t>T_*$, we obtain 
\EQ{
 \CAS{T_*<t<\dot T_{2/3} \implies 2/9+O(H^{-2})<\dot\te<2+O(H^{-1}), \\
 t>\dot T_{1/3} \implies -2/3+O(H^{-1})>\dot\te>-2.}}

Thus $\ddot v$ is exponentially increasing on $[T_*,\dot T_{2/3}]$ and exponentially decreasing on $[\dot T_{1/3},\I)$. Integrating $\ddot v \sim \pm\dot\te\ddot v$ in $t$ from $T_*$ or $\I$, and using \eqref{asy T*} for $t<T_*$, we deduce that 
\EQ{
 \CAS{0<t<\dot T_{2/3} \implies -\ddot v \sim a-\dot v+\ox_H  \sim at-v+\ox_H,
  \\ \dot T_{1/3}<t<\I \implies -\ddot v \sim \dot v \sim v(\I)-v.}}
In particular, $-\ddot v(\dot T_{2/3})\sim H^{-1}\sim-\ddot v(\dot T_{1/3})$. Also, the upper bound $v\lec H$ by \eqref{upb v} implies $\dot T_{2/3}\lec H^2$. 

For $\dot T_{2/3}<t<\dot T_{1/3}$, as long as $-\ddot v\sim H^{-1}$, we have
\EQ{ \label{te on the top}
 \ddot\te=\ddot v\te_v + \dot v^2\te_{vv} \sim -1,}
which means that $\dot\te$ is decaying and $\te$ is concave. 
Hence by continuity (and using $-\ddot v\lec H^{-1}$), $-\ddot v=e^{\te}\sim H^{-1}$ is extended to $\dot T_{2/3} \le t \le \dot T_{1/3}$,  as well as \eqref{te on the top}, which implies $|\dot T_{2/3}-\dot T_{1/3}|\sim 1$. 

For brevity, denote 
\EQ{
 T_a:=\dot T_{1/2}, \pq t_a:=t-T_a.}
Gathering the above estimates, we obtain   
\EQ{ \label{exp loc}
 \pt 0<t<T_a \implies -\ddot v \sim a-\dot v+\ox_H \sim at-v + \ox_H = a\ox_{|t_a|}, 
 \pr T_a<t<\I \implies -\ddot v \sim \dot v \sim v(\I)-v =a\ox_{|t_a|}.}
where the error term $\ox_H$ is absorbed by $\ox_{|t_a|}$ on the right side of $t<T_a$, using that $T_a\sim H^2$. Note however that $\ox_H$ can not be ignored for the equivalence on the left, since the exponential behavior becomes degenerate, i.e. $|\dot\te|\ll 1$, near $t=0$. 

Using the above behavior of $\dot v$ and $T_a\lec H^2$ in the kinetic energy, we have 
\EQ{ \label{KEC 0}
 1=\int_0^\I\dot v^2dt\pt=\int_0^{T_a}a^2(1+\ox_{|t_a|})^2dt+\int_{T_a}^\I a^2\ox_{|t_a|}dt
 =a^2(T_a+O(1)),}
hence 
\EQ{
 T_a = \frac{1}{a^2}+O(1).}
Then using the behavior \eqref{exp loc} of $v$, as well as \eqref{ex a la}, we obtain 
\EQ{ \label{exp va}
  v(T_a)=\frac{1}{a}+O(a)=H+O(H^{-1}), \pq v(\I)=H+O(H^{-1}).}

\subsection{Soliton approximation}
Next we derive an approximate shape of $v$ around the transition $t=T_a$, using the equation \eqref{eq w}. Using \eqref{exp va} and \eqref{exp loc}, we have 
\EQ{ \label{tev 0}
 \te_v=2u+O(1/u)=2(H+v(T_a))+O(a\LR{t_a})=\frac 4a+O(a\LR{t_a}),}
where the growing factor (in $t_a$) is harmless when combined with the exponential decay of $\ddot v$.  
Then the equation \eqref{eq w} can be expanded as  
\EQ{ \label{eq w 0}
 \ddot v=\frac2a\dot v(\dot v-a)+\cR, \pq \cR(t):=\int_\I^t \ddot v\dot v(\te_v-4/a)dt.}
Using that $\cR(0)=\ddot v(0)=\ox_H$, the remainder can be written also 
\EQ{ \label{eq R}
 \cR(t)=\int_0^t \ddot v\dot v(\te_v-4/a)dt+\ox_H.}
Hence by \eqref{tev 0} and the exponential localization of $\ddot v$, see \eqref{exp loc}, we obtain 
\EQ{ \label{bd R}
 \cR = a^3\ox_{|t_a|}.}
The solution to the ODE for $\dot v$ without the remainder $\cR$, namely
\EQ{
 \dot w = \frac2a w(w-a), \pq w(T_a)=a/2}
is explicitly given by
\EQ{ \label{def w0}
 w=\frac a2w_0, \pq w_0(t):=1-\tanh t_a.}
Let $w_R$ be the remainder for $\dot v$ defined by 
\EQ{ \dot v=\frac{a}{2}(w_0+w_R),}
then it satisfies 
\EQ{ \label{eq wR}
 \dot w_R = -(2\tanh t_a)w_R + w_R^2 + \frac 2a\cR, \pq w_R(T_a)=0,}
which can be put in a Duhamel form 
\EQ{ \label{Duh wR}
 w_R = \int_0^{t_a}\frac{\cosh^2s}{\cosh^2t}\Br{w_R^2+\frac 2a\cR}(s+T_a)ds.}
Since $\cR/a=a^2\ox_{|t_a|}$ by \eqref{bd R}, a bootstrapping argument for this integral equation of $w_R$, using the exponential decay of $\sech^2t$, yields
\EQ{
 |w_R| \le a^2\ox_{|t_a|}.}
Injecting this and \eqref{bd R} into \eqref{eq wR} yields the same bound on $\dot w_R$. Thus we obtain 
\EQ{ \label{est wR}
 |w_R|+|\dot w_R| \le a^2\ox_{|t_a|}}
and so an expansion
\EQ{ \label{ex vt}
 \pt \dot v = \frac a2(w_0 + w_R)= \frac a2(1-\tanh t_a) + a^3\ox_{|t_a|},
 \pr \ddot v = \frac a2\dot w_0 + a^3\ox_{|t_a|} = -\frac a2 \sech^2 t_a + a^3\ox_{|t_a|},}
namely the soliton approximation. 
Moreover, integrating it from $t=0$ and from $t=\I$ yields respectively
\EQ{ \label{ex v}
 v =\CAS{ at-\frac a2\log(1+e^{2t_a}) + a^3\ox_{-t_a},
  \\v(\I)-\frac a2\log(1+e^{-2t_a}) + a^3\ox_{t_a}. } }
Note that we can not retain the exponential decay on the other side beyond $t=T_a$ in each integration. 
Comparing the two expressions (say at $t=T_a$) yields
\EQ{ \label{vI}
 v(\I) = aT_a + O(a^3).}
Let $v_\pm$ denote the two primitives of $w_0$ used above, namely
\EQ{ \label{def vpm}
 \pt v_- :=2t-\log(1+e^{2t_a})=2T_a-\log(e^{-2t_a}+1)
 \pn=\CAS{2t+\ox_{-t_a}, \\ 2T_a+\ox_{t_a},}
 \pr v_+ :=-\log(1+e^{-2t_a})
  =2t_a-\log(e^{2t_a}+1) = \CAS{\ox_{t_a}, \\2t_a+\ox_{-t_a},} }
satisfying $\dot v_\pm=w_0$, $v_-(T_a)=2T_a-\log 2$, and $v_+(T_a)=-\log 2$.  

\subsection{Expanding integral conditions}
Next we check a couple of global conditions on the approximation function obtained above. 
Firstly, the {\it kinetic energy condition}, which was already used in \eqref{KEC 0}, implies 
\EQ{ \label{ke id}
 1\pt=\int_0^\I\dot v^2 dt = \frac{a^2}{4}\int_0^\I w_0^2dt + O(a^4)
 \pr=\frac{a^2}{4}[2v_++w_0]_0^\I + O(a^4) = \frac{a^2}{4}(4T_a-2)+O(a^4),}
where $\dot w_0=w_0^2-2w_0=w_0^2-2\dot v_+$ was used for integration. 
Thus we obtain 
\EQ{ \label{T by a}
 T_a= \frac{1}{a^2}+\frac12+O(a^2).}

Secondly, {\it the initial acceleration condition}, which was already used in \eqref{eq R}, implies 
\EQ{ \label{ini acc}
 \ox_H = \ddot v(0) = \cR(0) = -\int_0^\I \ddot v\dot v(\te_v-4/a)dt.}
For the last term, using \eqref{ex v}, \eqref{vI} and \eqref{exp va}, we have
\EQ{ \label{tev 01}
 \te_v=2u-\frac{1}{u}+O(a^3)=2H+2aT_a+av_+-\frac{a}{2}+O(a^3\LR{t_a}).}
Plugging this as well as \eqref{ex v} into the above yields 
\EQ{
 O(a^5) \pt= -\int_0^\I \ddot v\dot v (2H+2aT_a-a/2-4/a) dt- \int_0^\I \frac{a^3}{4}\dot w_0 w_0 v_+ dt
 \pr= a^2(H+aT_a-a/4-2/a) - \frac{3a^3}{4}, }
where \eqref{int zwv} was used to compute the last integral. Thus we obtain 
\EQ{ \label{H+aT}
 H+aT_a = \frac2a + a + O(a^3),}
and by \eqref{T by a} and \eqref{vI}
\EQ{ \label{H by a}
 \pt H = \frac{1}{a} + \frac{a}{2} + O(a^3) = aT_a + O(a^3) = v(\I)+O(a^3), 
 \pr a = \frac{1}{H} + \frac{1}{2H^3} + O(H^{-5}).}

The coincidence of $H$, $aT_a$ and $v(\I)$ to the order $O(a^3)$ (instead of $O(a)=O(H^{-1})$) appears mysterious in our computation. 

\subsection{The main expansion}
Finally, to expand $\int_0^\I e^{2\y}dt$, we consider the main part of phase difference from the soliton approximation:
\EQ{
 \x\pt:=u^2-2H^2-2t+2\log(2\cosh t_a)=u^2-2H^2-2T_a - 2v_+.}
Let $v_R$ be a primitive of $w_R$ defined by 
\EQ{ \label{def vR}
  v=\frac{a}{2}(v_- + v_R) = aT_a + \frac{a}{2}(v_+ + v_R).}
Then \eqref{ex v} implies $v_R=a^2\ox_{-t_a}$. Using $a(H+aT_a)=2+a^2+O(a^4)$ from \eqref{H+aT}, 
\EQ{ \label{exp xi 0}
 \x\pt=\BR{H+aT_a+\frac{a}{2}(v_++v_R)}^2-2H^2-2T_a-2v_+
 \pr=(H+aT_a)^2 - 2H^2 - 2T_a + a^2v_+ + 2v_R + \frac{a^2}{4}v_+^2 + O(a^4).}
To expand the constant part, define $A,\hat H,\hat T$ by
\EQ{ \label{AHT exp}
A=\frac{1}{a}+\frac{a}{2},\pq  H=A+a^3\hat H, 
 \pq T_a=\frac{A}{a}+a^2\hat T}
so that $|\hat H|+|\hat T|\lec 1$ by \eqref{H by a}. Then 
\EQ{ \label{xi canc}
 \pt (H+aT_a)^2 -2H^2-2T_a =\BR{2A+a^3(\hat H+\hat T)}^2 - 2(A+a^3\hat H)^2 - \frac{2A}{a}-2a^2\hat T
 \pr=A(2A-2/a) + a^2(4aA-2)\hat T +O(a^6)
 \pr= 1 + a^2(2\hat T+1/2) + 2a^4\hat T + O(a^6)}
Plugging this into \eqref{exp xi 0}, we obtain 
\EQ{
 \x = 1 + a^2(2\hat T+1/2 + v_+) + 2v_R + \frac{a^2}{4}v_+^2 + O(a^4).}
Hence 
\EQ{ \label{exp exi}
 (u/H)^2e^{\y}\pt=e^{\x-2\log(2\cosh t_a)-c_Eu^{-2}} = \frac{e^{\x+O(a^4)}}{4}\sech^2 t_a
 \pr=\frac{e}{4}|\dot w_0|\BR{1+a^2(2\hat T+1/2+v_+ +v_+^2/4)+2v_R+O(a^4)}.}
Using \eqref{ex v} and \eqref{H by a}, we have 
\EQ{ \label{1/u}
 \frac{2H}{u}  \pt= \frac{2H}{2H+\tfrac a2 v_+ + O(a^3)} 
 =1-\frac{a^2}{4}v_+ + O(a^4\LR{t_a}^2),}
Combining this with \eqref{exp exi} yields  
\EQ{ \label{exp I0}
 e^{\y}= \frac{e}{16}|\dot w_0|\BR{1+\frac{a^2}{2}(1+v_++v_+^2/2)+2(a^2\hat T+v_R)} +a^4\ox_{|t_a|}.}
$a^2\hat T+v_R$ has cancellation coming from the kinetic energy condition
\EQ{
 1\pt=\int_0^\I \dot v^2 dt=\frac{a^2}{4}\int_0^\I(w_0^2+2w_0w_R+w_R^2)dt
 \pr=\frac{a^2}{4}\BR{4T_a-2+\ox_H +\int_0^\I(2w_0w_R+w_R^2)dt}.}
Rewriting it using partial integration with $\dot v_R=w_R$  and \eqref{est wR} 
\EQ{ \label{TvR}
 \hat T=a^{-2}\BR{T_a-\frac12-\frac1{a^2}} \pt=-\int_0^\I\frac{2w_0w_R+w_R^2}{4a^2}dt+\ox_H
 \pr=\frac{1}{2a^2}\int_0^\I \dot w_0v_Rdt+O(a^2),}
we obtain 
\EQ{
 \int_0^\I|\dot w_0|(a^2\hat T+v_R)dt = O(a^4).}
The fact that this expression appears in the integral $\sS_0$, namely \eqref{exp I0}, is another mysterious cancellation in our computation. 

Using it in \eqref{exp I0}, as well as explicit integration \eqref{int zwv}, we obtain
\EQ{ \label{exp sS 0}
 \sS_0(H) =\int_0^\I e^{\y}dt \pt= \frac{e}{16}\int_0^\I|\dot w_0|\BR{1+\frac{a^2}{2}(1+v_++v_+^2/2)}dt + O(a^4)
 \pr=\frac{e}{8}(1+a^2/2)+O(a^4).}
On the other hand, the expansion \eqref{exp mu} for the radial Sobolev can be rewritten for $\ck\mu$ defined in \eqref{def muhat} as 
\EQ{ \label{exp cmu}
 \ck\mu(2H^2)\pt=8e^{1-2\ga}\BR{1-\tfrac12H^{-2}-\tfrac18H^{-4}+O(H^{-6})}
 \pr=8e^{1-2\ga}\BR{1-\tfrac12a^2+\tfrac38a^4+O(a^6)},}
where \eqref{H by a} was used. 
Combining the above two expansions in \eqref{SH 2 S0}, we finally obtain an expansion of the constrained maximum: 
\EQ{ 
 \cS_H(g^*_H) = \ck\mu(2H^2)\int_0^\I e^{\y}dt + \ox_H = e^{2-2\ga}+O(H^{-4})=\cS_\I+O(H^{-4}),}
where $O(H^{-2})$ is absent by cancellation between the expansion of $\sS_0(H)$ and that of the radial Sobolev. This is another mysterious cancellation in our computation, which shows that the growth order $u^{-2}e^{u^2}$ is very critical, at least more than $e^{u^2}$ in the original Trudinger-Moser on the disk. 
Thus we have obtained the second order expansion in \eqref{asy SH 0}, but we need further to expand the next term. 

\section{The next expansion} \label{sect:next exp} 
In this section, we extend the expansion \eqref{exp sS 0} to the next order, namely Lemma \ref{lem asy SH 0}, which is needed to determine the existence for the critical function $s^{-1}\el_L^s$. 
The next term is obtained using the linearized equation around the soliton $\dot w_0$. 

\subsection{Linearized approximation} \label{ss:linearize}
In the Duhamel form \eqref{Duh wR}, first we have from \eqref{tev 01} and \eqref{H+aT}, 
\EQ{ \label{exp tev 1}
 \te_v = \frac 4a + av_+ + \frac{3a}{2} + O(a^3\LR{t_a}).}
Then $\cR$ is expanded as
\EQ{
 \frac8{a^3}\cR\pt=\int^t_\I \frac{8}{a^3}\dot v\ddot v(\te_v-4/a)dt = \int^t_0 \frac{8}{a^3}\dot v\ddot v(\te_v-4/a)dt+\ox_H
 \pn=\cR_0 + a^2\ox_{|t_a|},}
with the approximation, using the explicit integration by \eqref{prim zwv}, 
\EQ{
 \cR_0:=\int^t_\I w_0\dot w_0(2v_++3)dt = v_+(w_0^2-4)+w_0(w_0-2) = \ox_{|t_a|}.}
Then we extract the main part from \eqref{Duh wR} using \eqref{est wR}
\EQ{ \label{def w1}
  w_R = \frac{a^2}{4}w_1 +  a^4\ox_{|t_a|}, 
 \pq w_1 := \int_{T_a}^{t}\frac{\dot w_0(t)}{\dot w_0(s)}\cR_0(s)ds.}
In other words, $w_1$ is the solution to the linearized equation for the main remainder
\EQ{
 \dot w_1 + (2\tanh t_0)w_1 = \cR_0, \pq w_1(t_a)=0.}
The above integral is computed using $\dot w_0=w_0(w_0-2)$ and $(1/w_0)_t=2/w_0-1$, 
\EQ{ \label{w1 id}
 w_1 = \dot w_0\int_{T_a}^t[2v_+ + v_+(2/w_0-1)+1]dt = \dot w_0(2\nu_0+v_+/w_0) = \ox_{|t_a|},}
where a primitive of $v_+$ is introduced:
\EQ{ \label{def nu0}
 \nu_0 \pt:=\int_{T_a}^t v_+dt-v_+(T_a)/2=-\int_{0}^{t_a}\log(1+e^{-2s})ds + \tfrac12\log 2
 \pr=\CAS{-c_0 + \tfrac12\log 2 + \ox_{t_a},
  \\ t_a^2 + c_0 + \tfrac12\log 2 + \ox_{-t_a},} }
with the constant $c_0$ defined by
\EQ{
 c_0:=\int_{0}^\I\log(1+e^{-2t})dt=\sum_{k=1}^\I \int_0^\I \frac{(-1)^{k-1}}{ke^{2tk}}dt = \sum_{k=1}^\I\frac{(-1)^{k-1}}{2k^2}=\frac{\z(2)}{4}.}
Thus the next order expansion of $\dot v$ is given by 
\EQ{ \label{exp vt 1}
 \dot v = \frac a2w_0 + \frac{a^3}{8} w_1 + a^5 \ox_{|t_a|},}
with \eqref{w1 id}. Then using the equation \eqref{eq wR}, we also obtain 
\EQ{ \label{exp vtt 1}
 \ddot v = \frac a2\dot w_0 + \frac{a^3}{8}\dot w_1 + a^5 \ox_{|t_a|}.}
For the expansion of $v$, we need to integrate $w_1$
\EQ{
 v_1 \pt:=\int_\I^t w_1 dt = \int_\I^t[2\dot w_0\nu_0+(w_0-2)v_+]dt
 \pr=[2w_0\nu_0-v_+^2/2-2\nu_0]_\I^t=2w_0\nu_0-2(\nu_0-\nu_0(\I))-v_+^2/2 = \ox_{t_a},}
where $\nu_0(\I):=\lim_{t\to\I}\nu_0(\I)=c_0-\frac12\log 2$. 
Using \eqref{def vpm} and \eqref{def nu0}, we have
\EQ{ \label{v1 at 0}
 v_1 = 2\nu_0+2\nu_0(\I)-(2t_a)^2/2+\ox_{-t_a}=2\log 2+\ox_{-t_a}.}
Thus integrating the expansion of $\dot v$, we obtain 
\EQ{ \label{exp v 1}
 v \pt= v(\I) + \frac a2v_+ + \frac{a^3}{8} v_1 + a^5 \ox_{t_a}
 = \frac a2v_- + \frac{a^3}{8}(v_1-2\log 2) + a^5 \ox_{-t_a}.}
In particular,
\EQ{ \label{vI 1}
 v(\I) = aT_a - \frac{a^3}{4}\log 2 + O(a^5).}

\subsection{Expanding integral conditions}
The kinetic energy condition \eqref{ke id} is further expanded by \eqref{exp vt 1}
\EQ{ \label{exp ke 1}
 1\pt=\int_0^\I\dot v^2 dt = \frac{a^2}{4}\int_0^\I w_0^2dt + \frac{a^4}{8}\int_0^\I w_0w_1dt
 + O(a^6),}
where we can compute the next order integral using \eqref{w1 id} and $\dot w_0=w_0(w_0-2)$
\EQ{ \label{int w0w1}
 \int_0^\I w_0w_1dt \pt= \int_0^\I [2w_0\dot w_0\nu_0+(w_0^2-2w_0)v_+]dt
 \pr=[w_0^2\nu_0-v_+^2]_0^\I = -4c_0-2\log2 +\ox_H.}
Thus \eqref{exp ke 1} becomes 
\EQ{
 1 = \frac{a^2}{4}(4T_a-2) - \frac{a^4}{4}(2c_0+\log 2) + O(a^6),}
which yields the next expansion of the transition time 
\EQ{ \label{Ta 1}
 T_a=\frac{1}{a^2}+\frac 12 + \frac{a^2}{4}(2c_0+\log 2) + O(a^4),}
or $\hat T=\frac12c_0+\frac14\log 2+O(a^2)$. Injecting this into \eqref{vI 1} yields
\EQ{ \label{vI 1-2}
 v(\I)=\frac{1}{a}+\frac a2 + \frac{c_0}{2}a^3  + O(a^5).}
 
For the initial acceleration condition \eqref{ini acc}, we first expand
\EQ{ \label{exp tev 2}
 \pt\te_v-\frac 4a=2u-\frac{1}{u}+\frac{2}{u^3}-\frac 4a+O(a^5)
 \pr=2H + 2v(\I) -\frac 4a + a v_+ + \frac{a^3}{4} v_1
 - \frac{1}{2H}\BR{1-\frac{a^2}{4}v_+} + \frac{a^3}{4}+O(a^5\LR{t_a}^2)
 \pr=2H -\frac 2a + a(1+v_+) + \frac{a^3}{4}(4c_0 + v_1 + 1)
  \prQ- \BR{\frac a2-\frac{a^3}{4}}\BR{1-\frac{a^2}{4}v_+} + O(a^5\LR{t_a}^2)
 \pr=\frac a2(3+2v_+) + \frac{a^3}{8}(8c_0 + 16\hat H +4  +  v_+ + 2 v_1) +O(a^5\LR{t_a}^2),}
using \eqref{exp v 1}, \eqref{1/u}, \eqref{vI 1-2}, \eqref{H by a} and \eqref{AHT exp}. 
Then the remainder is expanded as
\EQ{
 \cR=\int_\I^t \ddot v\dot v(\te_v-4/a)dt=\frac{a^3}{8}\cR_0+\frac{a^5}{32}\cR_1+a^7\ox_{|t_a|},} 
with
\EQ{ \label{def R1}
 \cR_1:= \int_\I^t \dot w_0 w_0(8c_0+16\hat H+4+v_++2v_1) + (\dot w_0 w_1 + w_0\dot w_1)(3+2v_+)dt.}
Since $\cR_0(0)=\ox_H$, the initial acceleration condition implies 
\EQ{ \label{R1=0}
 \cR_1(0) = O(a^2).}
The cubic part containing $w_1,\dot v_1$ is computed by 
\EQ{ \label{R1 new int}
 \pt\int_0^\I 2(\dot w_0 w_0 v_1 + \dot w_0 w_1 v_+  + w_0\dot w_1 v_+ )dt 
 \pr=[(w_0^2-4)v_1+2w_0w_1v_+]_0^\I + \int_0^\I (4-3w_0^2)w_1dt
 \pr=\int_0^\I (4-3w_0^2)(2\dot w_0\nu_0+v_+(w_0-2)) dt + \ox_H,}
where the part with $\nu_0$ is integrated by parts 
\EQ{
 \int_0^\I 2(4-3w_0^2)\dot w_0\nu_0dt = [2(4w_0-w_0^3)\nu_0]_0^\I-\int_0^\I 2w_0(4-w_0^2)v_+dt,}
so
\EQ{
 \eqref{R1 new int}\pt=\int_0^\I v_+(w_0-2)[4-3w_0^2+2w_0(w_0+2)]dt + \ox_H
 \pr=\int_0^\I[v_+\dot w_0(4-w_0) + 4v_+(w_0-2)]dt + \ox_H
 \pn=5+16c_0+\ox_H,}
where the last integral can be computed by using \eqref{int wwv} and \eqref{int c0}. 
Thus \eqref{R1=0} becomes, using \eqref{int zwv}, 
\EQ{ \label{exp R1=0}
 O(a^2)\pt=(8c_0+16\hat H+4)\int_0^\I \dot w_0w_0dt + \int_0^\I \dot w_0w_0v_+ dt
 + 3[w_0w_1]_0^\I + \eqref{R1 new int}
 \pr=-2(8c_0+16\hat H+4)+3+5+16c_0+\ox_H=-32\hat H+\ox_H,}
and so $\hat H = O(a^2)$, which is another unexpected cancellation. Thus we obtain  
\EQ{ \label{H 1}
  H=\frac{1}{a}+\frac{a}{2}+O(a^5).}

\subsection{The main expansion}
To expand $\int_0^\I e^{\y}dt$, we first improve the expansion of $\x$ from \eqref{exp xi 0}, using \eqref{xi canc}, \eqref{Ta 1} and \eqref{H 1}
\EQ{ \label{exp xi 1}
 \x\pt=1 + a^2(2\hat T+1/2) + 2a^4\hat T + (a^2+a^4\hat T)v_+ 
 \prQ+ (2+a^2)v_R + \frac{a^2}{4}v_+^2 + \frac{a^2}{2}v_+v_R + O(a^6)
 \pr=1 + \frac{a^2}{2}(4\hat T+1-2\log 2+2v_++v_+^2/2+v_1) 
 \prq+ \frac{a^4}{4}\BR{8\hat T-2\log 2+(4\hat T-\log 2)v_+ + v_1 + \frac{v_1 v_+}{2} +2v_R^1} + O(a^6)
 \pr=1 + \frac{a^2}{2}(c_1 + 2v_+ + v_+^2/2 + v_1)
 \prq + \frac{a^4}{4}(4c_0+ 2c_0 v_+  + v_1 + v_1 v_+/2  + 8\hat T_1 + 2v_R^1) + O(a^6),}
where $\hat H$ is dropped by \eqref{H 1}, while $\hat T,v_R$ are expanded, respectively by using \eqref{AHT exp} and \eqref{Ta 1}, and by using \eqref{def vR} and \eqref{exp v 1}: 
\EQ{ \label{exp T vR}
 \hat T = \frac12 c_0 + \frac14\log 2 + a^2\hat T_1, \pq v_R = \frac{a^2}{4}(v_1-2\log 2+ a^2v_R^1), \pq v_R^1=\ox_{-t_a},}
which defines $\hat T_1$ and $v_R^1$, and $c_1$ is the constant defined by 
\EQ{ \label{def c1}
 c_1:=2c_0+1-\log 2 = \frac{\z(2)}{2}+1-\log 2.}

Next we improve \eqref{1/u}, using \eqref{exp v 1}, \eqref{vI 1-2} and \eqref{H 1},  
\EQ{
 \frac{2H}{u}\pt=\frac{2H}{2H + \tfrac a2 v_+ + \tfrac{a^3}{2}(c_0+v_1/4) + O(a^5)} 
 \pr=1 - \frac{a}{4H} v_+ - \frac{a^3}{4H}(c_0+v_1/4) + \frac{a^2}{16H^2} v_+^2 + O(a^6\LR{t_a}^3)
 \pr=1-\frac{a^2}{4}v_+ - \frac{a^4}{16}(4c_0-2v_++v_1-v_+^2) + O(a^6\LR{t_a}^3).}
The two expansions yield
\EQ{ \label{exp ey 1}
 \pt e^{\y}=\frac{(2H/u)^2}{16\cosh^2t_a}e^{\x-c_Eu^{-4}}
 \pr=\frac{e|\dot w_0|}{16}\Bigl[1-\frac{a^2}{2}v_+ - \frac{a^4}{8}(4c_0-2v_++v_1-3v_+^2/2)\Bigr]
 \times\BR{1-c_E(2H)^{-4}}
 \prq\times\Bigl[1+ \frac{a^2}{2}(c_1 + 2v_+ + v_+^2/2 + v_1)+ \frac{a^4}{8}(c_1 + 2v_+ + v_+^2/2 + v_1)^2
 \prQ\pQ+ \frac{a^4}{4}(4c_0+ 2c_0 v_+  + v_1 + v_1 v_+/2  + 8\hat T_1 + 2v_R^1)\Bigr] + a^6\ox_{|t_a|}.}
This is the first place where $c_E$ makes a difference. Hence we have
\EQ{ \label{exp ey 2}
 e^{\y}=\frac{e|\dot w_0|}{16}\BR{1+\frac{a^2}{2}f_1+\frac{a^4}{4}f_2+a^4(2\hat T_1+v_R^1/2)}+a^6\ox_{|t_a|},}
where $f_1 = c_1 + v_+ + v_+^2/2 + v_1$ and 
\EQ{
 \pt f_2 = -2c_0+v_+-v_1/2+3v_+^2/4 - c_E/4 + 4c_0+2c_0v_+ + v_1 + v_1 v_+/2 
 \prQ + (c_1+2v_++v_+^2/2+v_1) ^2/2 - v_+(c_1+2v_++v_+^2/2+v_1) 
 \prq= 2c_0 + \tfrac12 c_1^2 - c_E/4 + (1+2c_0+c_1)v_+
 \pn+ \BR{\tfrac 34 + \tfrac{c_1}2} v_+^2 + \tfrac 12 v_+^3 + \tfrac 18 v_+^4
 \prQ+ \BR{\tfrac 12+c_1} v_1 + \tfrac 32 v_1v_+ + \tfrac 12 v_1v_+^2 + \tfrac 12 v_1^2
 \prq= c_4 + c_5 v_+ + c_6 v_+^2 + \tfrac12v_+^3 + \tfrac18v_+^4
 +c_7 v_1 + \tfrac32v_1v_+ + \tfrac12 v_1v_+^2 + \tfrac 12v_1^2,}
with the constants $c_4$--$c_7$ defined by 
\EQ{
 \pt c_4:=2c_0 + \tfrac 12c_1^2 -c_E/4, 
 \pq c_5:=1+2c_0+c_1, 
 \pq c_6:=\tfrac 34 + \tfrac 12c_1, 
 \pq c_7:=\tfrac 12+c_1.}
For the $O(a^4)$ term, the polynomial of $v_+$ in $f_2$ is easily integrated by \eqref{int zwv}
\EQ{
 \pt\int_0^\I|\dot w_0|(c_4 + c_5 v_+ + c_6 v_+^2 + \tfrac12v_+^3 + \tfrac18v_+^4)dt
 =2c_4-2c_5+4c_6+\ox_H.}
The next term is computed using \eqref{v1 at 0} and \eqref{int w0w1}
\EQ{
 \pt\int_0^\I|\dot w_0|v_1dt=[w_0v_1]_\I^0 + \int_0^\I w_0w_1dt
 \pr=2v_1(0)-4c_0-2\log2+\ox_H=2\log2-4c_0+\ox_H.}
Hence, using \eqref{def c1}, we obtain 
\EQ{
 \int_0^\I|\dot w_0|c_7v_1dt = 2c_7(1-c_1)+\ox_H.}

The integral with $2\hat T_1+v_R^1/2$ is computed by improving \eqref{TvR}, using \eqref{exp vt 1},
\EQ{
 \hat T \pt=-\int_0^\I\frac{2w_0w_R+w_R^2}{4a^2}dt+\ox_H
 \pn=\int_0^\I\BR{\frac{1}{2a^2} \dot w_0v_R - \frac{a^2}{64} w_1^2}dt + O(a^4),}
so, injecting \eqref{exp T vR},
\EQ{
 \frac12 c_0 + \frac14\log 2 + a^2\hat T_1 = \int_0^\I \BR{\frac{\dot w_0}{8}(v_1-2\log 2+ a^2v_R^1)
- \frac{a^2}{64} w_1^2}dt + O(a^4).}
The term on the right of $O(1)$ is computed using \eqref{int w0w1}
\EQ{ \label{int z0v1}
 \int_0^\I \dot w_0(v_1-2\log 2)dt=-\int_0^\I w_0w_1dt + \ox_H = 4c_0+2\log2+\ox_H,}
thereby we confirm that $O(1)$ terms match. Thus we obtain 
\EQ{ \label{TvR 1}
 \int_0^\I |\dot w_0|(2\hat T_1+v_R^1/2)dt = -\frac{1}{16}\int_0^\I w_1^2dt + O(a^2).}

Thus we are lead to integrate, ignoring the $\ox_H$ errors, 
\EQ{ \label{hard int}
 -\frac18\int_\R \dot w_0(3v_1v_+ +  v_1v_+^2 + v_1^2)dt-\frac{1}{16}\int_\R w_1^2 dt.}
For the left integral, we apply partial integration to $w_0(w_0-2)=\p_t(w_0-2)$: 
\EQ{ \label{hint 1}
 \pt-\int_\R \dot w_0(3v_1v_+ +  v_1v_+^2 + v_1^2)dt
 \pr=\int_\R (w_0-2)w_1(3v_+ + v_+^2 + 2v_1)dt + \int_\R (w_0-2)w_0(3v_1+2v_1v_+)dt,}
where the last integral is equal to, by the same partial integration, 
\EQ{
 -\int_\R(w_0-2)w_1(3+2v_+)dt - \int_\R(w_0-2)2w_0v_1dt,}
and the latter term is equal to 
\EQ{
 2\int_\R(w_0-2)w_1dt.}
For the other term in \eqref{hard int}, we have 
\EQ{
 w_1^2 = (2w_0\nu_0+v_+)(w_0-2)w_1.}
Thus we obtain
\EQ{
 \eqref{hard int}=\frac18\int_\R (w_0-2)w_1(v_+/2 + v_+^2 + 2v_1 -1-w_0\nu_0)dt.}
Next we use for partial integration
\EQ{ \label{ibp w1}
 (w_0-2)w_1=(w_0-2)[2\dot w_0\nu_0+v_+(w_0-2)]
 =\p_t[(w_0-2)^2\nu_0].}
Then we have 
\EQ{
 \eqref{hard int}\pt=-\frac18\int_\R(w_0-2)^2\nu_0(w_0/2+ w_0v_+ + 2w_1 - \dot w_0\nu_0)dt
 \pn- \frac{c_\I}{2},}
where 
\EQ{
 c_\I := \nu_0(\I) = \frac{1-c_1}{2}.}
Integrate by parts the term with $w_1$ using \eqref{ibp w1}. Then
\EQ{
 \pt-\int_\R (w_0-2)^2\nu_0 w_1dt = -\int_\R(w_0-2)w_1(w_0-2)\nu_0dt
 \pr=\int_\R (w_0-2)^2\nu_0[\dot w_0\nu_0+(w_0-2)v_+]dt+8c_\I^2}
Then 
\EQ{
 \eqref{hard int}\pt=-\frac18\int_\R\BR{(w_0-2)^2(w_0/2-w_0v_++4v_+)\nu_0 -3 (w_0-2)^2\dot w_0\nu_0^2}dt
 \prq-\frac{c_\I}{2}+2c_\I^2,}
where the last part of integral is computed using $\p_t(w_0-2)^3=3(w_0-2)^2\dot w_0$
\EQ{
 \frac38\int_\R(w_0-2)^2\dot w_0\nu_0^2dt
 =-c_\I^2-\frac14\int_\R(w_0-2)^3\nu_0v_+dt.}
Hence 
\EQ{
 \eqref{hard int}\pt=-\frac18\int_\R\BR{(w_0-2)^2(w_0/2+w_0v_+)\nu_0}dt
 \pn-\frac{c_\I}{2}+c_\I^2,}
then using $\p_t[(w_0-2)^2v_+/2]=(w_0-2)^2(w_0/2+w_0v_+)$, 
\EQ{
 \eqref{hard int}=\frac{1}{16}\int_\R(w_0-2)^2v_+^2dt -\frac{c_\I}{2}+c_\I^2 .}
For the last integral, we need a classical formula for the zeta function. 
\begin{lem} \label{zeta int}
For any $j\in\N$ and $k\in\C$ with $\re k>0$, we have 
\EQ{ \label{zeta formu}
 \int_\R (2-w_0)^j|v_+|^k dt 
 \pn= 2^{j-1}\Ga(k+1)\Br{\z(k+1)-\sum_{1\le n<j}n^{-k-1}}.}
\end{lem}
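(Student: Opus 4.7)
The plan is a direct computation using substitutions and the standard integral representation of $\zeta$. Recall $w_0(t)=1-\tanh t_a$ and $v_+(t)=-\log(1+e^{-2t_a})$, so shifting by $T_a$ and writing $s=t_a$, we have the identities
\EQN{
 2-w_0 = 1+\tanh s = \frac{2}{1+e^{-2s}},\pq |v_+|=\log(1+e^{-2s}).
}
Thus the integrand only depends on $u:=e^{-2s}\in(0,\I)$. First I would change variables $s\mapsto u$ (so $ds=-du/(2u)$ and the bounds become $(0,\I)$) to rewrite
\EQN{
 \int_\R (2-w_0)^j|v_+|^k dt = 2^{j-1}\int_0^\I \frac{[\log(1+u)]^k}{u(1+u)^j}\,du.
}

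Next I would perform the substitution $v=\log(1+u)$, so that $dv=du/(1+u)$, $u=e^v-1$, and $du/(1+u)^j=e^{-(j-1)v}dv$. The factor $1/u$ becomes $1/(e^v-1)$, and the integral transforms into
\EQN{
 2^{j-1}\int_0^\I \frac{v^k\, e^{-(j-1)v}}{e^v-1}\, dv.
}
Expanding the Bose-type factor as a geometric series $\frac{1}{e^v-1}=\sum_{n\ge 1} e^{-nv}$ (justified by monotone convergence on $(0,\I)$) and using $\int_0^\I v^k e^{-mv}dv=\Ga(k+1)/m^{k+1}$, I obtain
\EQN{
 2^{j-1}\Ga(k+1)\sum_{n=1}^\I \frac{1}{(n+j-1)^{k+1}} = 2^{j-1}\Ga(k+1)\sum_{m=j}^\I \frac{1}{m^{k+1}},
}
after reindexing $m=n+j-1$. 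Subtracting the first $j-1$ terms from $\zeta(k+1)$ gives the claimed formula.

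There is essentially no obstacle here: everything is a chain of elementary substitutions followed by the classical series representation of $\zeta(k+1)\Ga(k+1)$. The only minor care needed is the geometric-series interchange of sum and integral, which is immediate from positivity of the integrand for real $k>0$, and extends to complex $k$ with $\re k>0$ by analytic continuation since both sides are holomorphic in $k$ on that half-plane (the left side by dominated convergence using $|v_+|^k\lec |v_+|^{\re k}(1+|\log|v_+||)$ on compacts, the right side being an absolutely convergent Dirichlet series times $\Ga$).
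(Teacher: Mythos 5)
Your proof is correct and is essentially the paper's own argument: your two substitutions $u=e^{-2s}$ followed by $v=\log(1+u)$ compose into the single substitution $x=\log(1+e^{2s})$ (with the paper's sign convention $s=-t_a$) used in the text, leading to the identical integral $2^{j-1}\int_0^\I \frac{x^k e^{-(j-1)x}}{e^x-1}\,dx$ and the same geometric-series expansion. The closing remark on analytic continuation is not actually needed, since the term-by-term integration is already justified for all $\re k>0$ by absolute convergence (Fubini), as $\sum_n \int_0^\I x^{\re k}e^{-(j-1+n)x}dx=\Ga(\re k+1)\sum_{n\ge j}n^{-\re k-1}<\I$.
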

\begin{proof}
Changing variables by $s=-t_a$ and $x=\log(1+e^{2s})$ yields 
\EQ{
 \int_\R(2-w_0)^j|v_+|^k dt = \int_\R (1-\tanh s)^j[\log(1+e^{2s})]^kds = \int_0^\I \frac{(2e^{-x})^{j-1}x^k}{e^x-1}dx,}
since 
\EQ{
 1-\tanh s=2e^{-x}, \pq dx=2e^{2s-x}ds, \pq e^{2s}=e^x-1.} 
By the Taylor expansion, the above integral is equal to 
\EQ{
 2^{j-1}\sum_{n=1}^\I \int_0^\I x^k e^{(1-j-n)x}dx=2^{j-1}\sum_{n=j}^\I n^{-k-1}\Ga(k+1),}
which is equal to the right side of \eqref{zeta formu}. 
\end{proof}
Using the above lemma, we obtain 
\EQ{
 \eqref{hard int}\pt= \frac{\z(3)-1}{4} - \frac{c_\I}{2} + c_\I^2 .}

Adding the above computations, we conclude
\EQ{ \label{exp S0H 1}
 \sS_0(H)=\frac{e}{8}\BR{1+\frac{a^2}{2}+\frac{a^4}{8}c_8}+O(a^6),}
with 
\EQ{
 c_8\pt:=2c_4-2c_5+4c_6 + 2c_7(1-c_1) + \z(3)-1 - 2c_\I + 4c_\I^2 
 \pr= -c_E/2 + 1+\z(3)=-1.}
This is another unexpected cancellation, which is obtained by writing all the constants in terms of $c_1$ and $\z(3)$.  The constant $c_E$ was chosen to make $c_8=-1$.  
Combining it with \eqref{exp cmu}, we finally obtain 
\EQ{ \label{asy SH next}
 \cS_H(g_H^*)\pt=\ck\mu(2H^2)\sS_0(H)+\ox_H
 \pr=e^{2-2\ga}\BR{1+\tfrac12 a^2 + \tfrac{c_8}{8} a^4}\BR{1-\tfrac12a^2+\tfrac38a^4}+O(a^6)
 \pr=e^{2-2\ga}\BR{1+\frac{c_8+1}{8}a^4+O(a^6)}
 \pn=e^{2-2\ga}+O(H^{-6}),  } 
as claimed before in \eqref{asy SH 0}, which in particular proves Lemma \ref{lem asy SH 0}.

\section{Asymptotic expansion of the exponential radial Sobolev} \label{sect:exp Sob}
In this section, we prove the asymptotic formula for the radial exponential Sobolev inequality, Lemma \ref{lem:radSob}. First by rescaling, the optimal function $\mu$ for \eqref{sharp radS} is given by 
\EQ{
 \pt\mu(j)=\inf_{u\in X^1(j)}M_{(1,0)}(u), 
 \prq X^1(j):=\{u:[1,\I)\to\R \mid 2u(1)^2=j,\ K_{(1,\I)}(u)=1,\ M_{(1,\I)}(u)<\I\}.}
It is easy to see that this infimum $\mu(j)$ is attained for each $j>0$ by taking a sequence $\fy_n\in X^1(j)$ such that $M_{(1,\I)}(\fy_n)\searrow \mu(j)$, since \eqref{Schw} applied to $\fy_n$ implies that $\fy_n(r)\to\exists\fy(r)$ locally uniformly on $r\in(0,\I)$ (up to a subsequence), then $2\fy(1)^2=j$ and $M_{(1,\I)}(\fy)\le\mu(j)$, $K_{(1,\I)}(\fy)\le 1$ by weak convergence. If $K_{(1,\I)}(\fy)<1$, then consider
\EQ{
 \psi(t,r):=\fy(r+t(r-1))}
for $t,r>0$, which satisfies $2\psi(t,1)^2=j$ and 
\EQ{
 K_{(1,\I)}(\psi(t))=\int_1^\I|\fy'(r)|^2(r+t)dr,
 \pq M_{(1,\I)}(\psi(t))=\int_1^\I|\fy(r)|^2\frac{r+t}{(1+t)^2}dr.}
Hence $K_{(1,\I)}(\psi(t))$ is increasing in $t\in(0,\I)$ from $K_{(1,\I)}(\fy)$ to $\I$, while $M_{(1,\I)}(\psi(t))$ is decreasing, so we have $K_{(1,\I)}(\psi(t))=1$ and $M_{(1,\I)}(\psi(t))<\mu(j)$ for some $t>0$, which contradicts the definition of $\mu(j)$. Hence $\fy_n(|x|)\to\fy(|x|)$ strongly in $H^1(|x|>1)$ and $\fy$ is a minimizer for $\mu(j)$. 

Since $\fy$ is a constrained minimizer, there is a Lagrange multiplier $a\in\R$ such that for any $\psi\in C^\I(1,\I)$ with a compact support in $(1,\I)$, 
\EQ{
 \p_t|_{t=0}M_{(1,\I)}(\fy+t\psi)=a\p_t|_{t=0}K_{(1,\I)}(\fy+t\psi),}
i.e., $\fy(|x|)=-a\De\fy(|x|)$ in $\D'(|x|>1)$. The above argument implies that $a\le 0$, and obviously $a\not=0$ because of $\fy(1)>0$. Moreover, the elliptic regularity implies that the equation holds in the classical sense. 
Thus we deduce that for each $j>0$, there exist $\fy(j)\in X^1(j)$ and $\la(j)>0$ such that 
\EQ{
 \pt \mu(j)=M_{(1,\I)}(\fy(j)), \pq \De\fy(j)=\la(j)^2\fy(j) \pq(|x|>1).}
The ODE in $r$ has the unique solution with finite $M_{(1,\I)}$ and $K_{(1,\I)}$ for the boundary condition $\fy(1)=\sqrt{j/2}$, which is given in terms of the Bessel potential on $\R^2$: 
\EQ{
 G(x):=\F^{-1}(1+|\x|^2)^{-1}=\frac{1}{(2\pi)^2}\int_{\R^2}\frac{e^{ix\x}}{1+|\x|^2}d\x=\ox_{|x|}.}
Indeed we have 
\EQ{ \label{eq fyjmu}
 \pt \fy=K_{(1,\I)}(G(\la x))^{-1/2}G(\la x)=K_{(\la,\I)}(G)^{-1/2}G(\la x),
 \pr j=2\fy(1)^2=2K_{(\la,\I)}(G)^{-1}G(\la)^2,
 \pr \mu(j)=M_{(1,\I)}(\fy)=\la^{-2}K_{(\la,\I)}(G)^{-1}M_{(\la,\I)}(G).}
The second equation gives the relation between $j$ and $\la$, then plugging it into the third one yields a formula for $\mu(j)$. 
$M_{(\la,\I)}(G)$ and $K_{(\la,\I)}(G)$ can be written in terms of $G(\la),G_r(\la)$, using the energy and the Pokhozaev identity for $G$, which solves 
\EQ{
 r>0\implies \p_r(r G_r) = rG.}
Multiplying it with $G$ and integration yields 
\EQ{
 [GrG_r]_\la^\I-K_{(\la,\I)}(G) = M_{(\la,\I)}(G),}
and the multiplier $2rG_r$ yields 
\EQ{
 [(rG_r)^2]_\la^\I=[r^2G^2]_\la^\I-2M_{(\la,\I)}(G).}
Thus we obtain 
\EQ{
 \pt M_{(\la,\I)}(G)=\frac{\la^2(G_r(\la)^2-G(\la)^2)}{2},
 \pr K_{(\la,\I)}(G)=-\la G(\la)G_r(\la)+\frac{\la^2(G(\la)^2-G_r(\la)^2)}{2}.}
Plugging this into \eqref{eq fyjmu} and putting $\Te(\la):=-\la G_r(\la)/G(\la)>0$ yields
\EQ{ \label{eq jmu}
 \pt\frac{2}{j}=\frac{\la^2}{2}-\la\frac{G_r(\la)}{G(\la)}-\frac{\la^2}{2}\frac{G_r(\la)^2}{G(\la)^2}=\frac{\la^2}{2}+\Te(\la)-\frac{\Te(\la)^2}{2},
 \pr\mu(j)=\frac{M_{(\la,\I)}(G)}{\la^2K_{(\la,\I)}(G)}=\frac{j(\Te(\la)^2-\la^2)}{4\la^2 }.}

Asymptotic formulas for $G$ can be derived from the Laplace transform 
\EQ{
 \pt G=(1-\De)^{-1}\de=\int_0^\I e^{t\De-t}\de dt=\int_0^\I\frac{1}{4\pi t}e^{-\frac{r^2}{4t}-t}dt.}
Changing the variable $t=s^2$ and $\ta=(s-r/(2s))^2$ yields, noting that $(0,\I)\ni s\mapsto \ta\in(0,\I)$ is a 2-to-1 mapping except $s=\sqrt{r/2}$, 
\EQ{
 \pt G=\frac{e^{-r}}{2\pi}\int_0^\I e^{-(s-r/(2s))^2}\frac{ds}{s}
 =\frac{e^{-r}}{2\pi}\int_0^\I \frac{e^{-\ta}}{\sqrt{\ta(\ta+2r)}}d\ta,
 \pr G_r=-\frac{e^{-r}}{2\pi}\int_0^\I \frac{e^{-\ta}}{\sqrt{\ta(\ta+2r)}}\left[1+\frac{1}{2r}-\frac{\ta}{2r(\ta+2r)}\right]d\ta.}
Hence by the dominated convergence
\EQ{ \label{est GH}
 \pt \lim_{r\to\I}\sqrt{r}e^r G = \frac{1}{2\sqrt{2\pi}},
 \pq 0<r+\frac{1}{2}-\Te \lec r^{-1},}
so \eqref{eq jmu} implies that $\la$ should be bounded as $j\to\I$. 

On the other hand, \eqref{eq jmu} implies that $\la\to\I$ as $j\to+0$, and then combining \eqref{est GH} and \eqref{eq jmu}, it is easy to obtain 
\EQ{ \label{radSob j0}
 \la=\frac{4}{j}+O(1),\pq \mu(j)=\frac{1}{16} j^2+O(j^3)\pq(j\to+0).}

If $\la\to\exists\la_0>0$ along some sequence $j\to\I$, then \eqref{eq jmu} implies that 
\EQ{
 \Te(\la_0)^2-2\Te(\la_0)-\la_0^2=0 \iff \Te(\la_0)=1+\sqrt{\la_0^2+1}>\la_0+1,}
contradicting $\Te(r)<r+1/2$ in \eqref{est GH}. 
Therefore $\la\to 0$ as $j\to\I$. 

For asymptotic behavior as $r\to+0$, we have 
\EQ{
 2\pi e^rG \pt=\int_0^1\frac{1}{\sqrt{\ta(\ta+2r)}}+\frac{e^{-\ta}-1}{\sqrt{\ta(\ta+2r)}}d\ta+\int_1^\I \frac{e^{-\ta}}{\sqrt{\ta(\ta+2r)}}d\ta,
 \pr=[\log(\sqrt{\ta(\ta+2r)}+\ta+r)]_0^1
 \prQ +\int_0^1\frac{e^{-\ta}-1}{\ta}+O(r/(\ta+r))d\ta+\int_1^\I\frac{1+O(r/\ta)}{\ta}e^{-\ta}d\ta
 \pr=-\log r+\log 2-\ga+O(r\log r),}
where $\ga$ denotes Euler's constant, coming from the formula
\EQ{
 \pt\int_0^1\frac{1-e^{-\ta}}{\ta}d\ta - \int_1^\I \frac{e^{-\ta}}{\ta}d\ta=\ga.}
For $rG_r$, going back to the integral formula in $t$ and integrating by parts, we obtain
\EQ{
 -2\pi rG_r=\int_0^\I\frac{r^2}{4 t^2}e^{-\frac{r^2}{4t}-t}dt = \int_0^\I e^{-\frac{r^2}{4t}-t}dt,}
so 
\EQ{
 2\pi rG_r+1 = \int_0^\I (1-e^{-\frac{r^2}{4t}})e^{-t}dt
 \pt=\int_0^1\int_0^\I\frac{r^2}{4t}e^{-\frac{r^2\te}{4t}-t}dtd\te
 \pr=r^2\int_0^1 \pi G(r\sqrt\te)d\te.}
Plugging the above formula of $G$, we obtain 
\EQ{
 2\pi rG_r \pt=-1+\frac{r^2}{2}\int_0^1(-\log(r\sqrt\te)+\log 2-\ga+O(r\log r))d\te
 \pr= -1+\frac{r^2}{2}[\log(1/r)+1/2+\log 2-\ga]+O(r^3\log r),}
and so
\EQ{ \label{la 2 Te}
 \Te(r) &= \frac{1}{\log(1/r)+\log 2-\ga+O(r\log r)}.}
Hence as $j\to\I$, we have $\Te(\la)\sim 1/j$ and $\la=\ox_j$. 
Denoting for brevity 
\EQ{
 \chi:=\frac{1}{1+\sqrt{1-4/j}} = \frac12+\frac{2\chi^2}{j}=
\frac 12 + \frac{1}{2j} + O(j^{-2}),}
we obtain from \eqref{eq jmu} and \eqref{la 2 Te}
\EQ{
 \pt \frac{1}{\Te} = \frac{1}{1-\sqrt{1-4/j+\la^2}} = \frac{j}{4\chi} + \ox_j = \frac j2 - \chi + \ox_j,
 \pr \log(2/\la) = \ga+1/\Te+O(\la\log\la) = \ga + \frac j2 - \chi +\ox_j,}
and thus 
 \EQ{ \label{asy mu}
 \frac{1}{\mu(j)} \pt= \frac{4}{j(\Te^2/\la^2-1)} =\frac{4e^{-2\ga-j+2\chi}}{j}(j-2\chi)^2(1+\ox_j)
 \pr=4je^{-2\ga-j+1}\BR{1-\frac{1}{j}-\frac{4\chi^2}{j^2}}^2e^{4\chi^2/j}(1+\ox_j)
 \pr=4je^{-2\ga-j+1}\BR{1-j^{-1}-\frac1{2}j^{-2}-\frac5{6}j^{-3}-\frac{43}{24}j^{-4}-\frac{529}{120}j^{-5}+O(j^{-6})}.}

\section{The case of the disk} \label{sect:disk}
In this section, we prove Theorem \ref{thm:disk} on the disk $D$. 
The same argument as in the $\R^2$ case works, but it is simpler on $D$ by the following reasons. 
First, the decoupling into the central and the tail parts is almost trivial in this case. The tail part is treated by the elementary optimization of the Schwarz inequality \eqref{Schw}, instead of the exponential radial Sobolev inequality.
 
Now we start the proof of Theorem \ref{thm:disk}, in close comparison with the $\R^2$ case. 
It is convenient to introduce the following critical nonlinearity on the disk:
\EQ{ \label{asy f*}
 f^*_L(s):=\el_L^s e^{-s^{-1}-c_D's^{-2}} = \CAS{e^s(1-s^{-1}-c_Ds^{-2}) &(s\ge L^2),\\ 0 &(s\le L^2),}}
with a cut-off parameter $L>0$, where
\EQ{
 c_D':=c_D+1/2=2+2\z(3).}

First by the rearrangement, the existence of maximizer is the same for
\EQ{
 \pt \cS^D(g)=\sup_{u\in X^D}\{g\}(u^2), 
 \prQ X^D:=\{u:(0,1]\to[0,\I) \mid u'\le 0,\ K(u)\le 2,\ u(1)=0\}.}
Let $L\gg 1$ such that $f^*_L(s)\sim \el_L^s$, $u\in X^D$ with $\{f^*_L\}(u)\gec 1$, and define $S,\de,H,R$ in the same way by \eqref{def param}. 
By the same reasoning as on $\R^2$, we may assume, for large $L>1$, that $0<S,R<1$, $u(S)=L$, $u(R)=H$ and $K_{(R,1)}(u)=1\ge K_{(0,R)}(u)$. 
By the same change of variable $r=Re^{-t}$ and $u=v+H$, we have
\EQ{ \label{int el in t}
 \{f^*_H\}(u)=R^2e^{2H^2}\int_0^\I e^{2v^2-2t-(v-H)^2-u^{-2}-c_D'u^{-4}}dt.}
Since Schwarz \eqref{Schw} implies $Re^{H^2}\le 1$, while the original Trudinger-Moser \eqref{oTM} implies 
$\int_0^\I e^{2v^2-2t}dt \le \cS^D(\el_0^{s})<\I$, 
we deduce that $R \sim e^{-H^2}$. 
If $L\ge 4H$, then \eqref{small H int} implies 
\EQ{
 \{\el_L^s\}(u)  \le e^{-\frac{L^2}{16}}R^2 \le e^{-\frac{L^2}{16}},}
so $L<4H$ for large $L>1$. 
Since \eqref{Schw} implies $Se^{L^2/\de}\le 1$, the argument for \eqref{lbd ka} works with $C_1=1$. Since $\ka$ is bounded due to $R\sim e^{-H^2}$, we deduce in the same way as there that $\log(1/S)\sim HL$, $\de\sim L/H$, and $\{\el_L^s\}_{(R,\I)}(u) = \ox_H$. Moreover, 
\EQ{ \label{int el localize}
 \{f^*_H\}(u) = \int_{|u-2H|^2<N\log H}f^*_H(u^2)rdr + O(H^{-N})}
uniformly for $N>0$. Hence $2H+1>L$ for large $L$. If $u(R_0)=1/H$ at some $R_0\in(0,1)$, then by the same argument as for \eqref{tail R}, we have 
\EQ{ \label{tail R D}
 R_0 \ge R e^{H^2-2} \gec 1.}

To prove Theorem \ref{thm:disk}, by the same reasoning as on $\R^2$, it suffices to prove that $\{g\}(u)>\cS^D_\I=e$ for some $u\in X^D$ for the existence part, and that $\{g\}(u)<\cS^D_\I$ for all $u\in X^D$ for the non-existence part, where 
\EQ{
 \cS^D_\I := \lim_{H\to\I}\cS^D_H(f^*_H),}
with the constrained maximization defined by
\EQ{
 \pt \cS^D_H(g):=\sup_{u\in X^D_H}\{g\}(u),
 \prq X^D_H:=\{u\in X^D \mid \exists R\in (0,1),\ K_{(0,R)}(u)\le 1,\ K_{(R,\I)}(u)\le 1,\ u(R)=H\}. }
Thus the proof is reduced to the expansion
\EQ{ \label{exp SD}
 \cS^D_H(f^*_H)=e + O(H^{-6}), \pq(H\to\I).}

Taking it granted, let us finish the proof of Theorem \ref{thm:disk}. 
For the existence under the condition \eqref{ebl D}, let $H\gg L+1$ and let $u\in X^D_H$ be a maximizer of $\cS^D_H(f^*_L)$. Denote $\s:=\sqrt{7\log H}$. Then, using the above expansion, as well as \eqref{asy f*}, we obtain 
\EQ{
 \{g\}(u) \pt\ge \int_0^1 f^*_L(u^2)[1+a u^{-2p}+O(u^{-6})]rdr
 \pr\ge \{f^*_L\}(u)\BR{1+a\Br{2H+O(\s)}^{-2p}}+O(H^{-6})
 \pr=\cS^D_\I[1+a(2H)^{-2p}]+O(H^{-6}+H^{-2p-1}\s)>\cS^D_\I,}
for large $H$. Hence $\cS^D(g)$ is attained. 
In the case of condition \eqref{ebs D}, we have 
\EQ{
 \{g\}(u) \pt\ge \int_0^1 \BR{f^*_L(u^2)\Br{1+bu^{-2q}+O(u^{-6})}+a u^{2p}}rdr
 \pr\ge \cS^D_\I + O(H^{-6}) + O(H^{-2q}) + a \int_0^1 u^{2p} rdr,}
where the last term is estimated from below by 
\EQ{
 a \int_0^1 u^{2p} rdr \ge a R_0^2 H^{-2p}/2 \sim a H^{-2p},}
which dominates the error terms as $H\to\I$. Hence $\{g\}(u)>\cS^D_\I$ for large $H$ and so $\cS^D(g)$ is attained. 

For the non-existence under the condition \eqref{nbl D}, let $L_*\ge 1$ be large enough to have the above estimates (up to \eqref{tail R D}) for all $u\in X^D$ satisfying $\{g\}(u)\ge\cS^D_\I$. Fix the parameters $p,q,a,b$ and assume \eqref{nbl D} for some $L\ge L_*$. 
Suppose that $u\in X^D$ and $\{g\}(u)\ge\cS^D_\I$, so that we can apply the above argument (up to \eqref{tail R D}) to $u$.  Then 
\EQ{
 \{g\}(u) \pt\le \int_0^1 f^*_L(u^2)[1-a u^{-2p}+O(u^{-6})]rdr
 \pr\le \{f^*_H\}(u)\BR{1-a\Br{2H+O(\s)}^{-2p}} +O(H^{-6})
 \pr\le \cS^D_H(f^*_H)\BR{1-a\Br{2H+O(\s)}^{-2p}} +O(H^{-6})
 \pr\le \cS^D_\I[1-a(2H)^{-2p}]+O(H^{-6}+H^{-2p-1}\s).}
If $L\ge L_*$ is large enough, then $H>L/4\gg 1$ and so $\{g\}(u)<\cS^D_\I$. Hence $\cS^D(g)$ is not attained. In the case of condition \eqref{nbs D}, we have some constant $C>0$ 
\EQ{
 \{g\}(u) \pt\le \cS^D_\I + O(H^{-6}+ H^{-2q}) - a\int_0^1 u^{2p}rdr
 \pr\le \cS^D_\I - C a  H^{-2p} + O(H^{-6}+ H^{-2q}) < \cS^D_\I}
for $L\ge L_*$ large enough, so $\cS^D(g)$ is not attained.  

It remains to prove the expansion \eqref{exp SD} for $\cS^D_H(f^*_H)$. 
This is done in the same way as for $\cS_H(g^*_H)$ in Section \ref{sect:loc int}. To avoid repeating the same computations, we will describe only but thoroughly the differences from the $\R^2$ case. 
We will see even more coincidence than expected between the two cases despite of some numerical differences. In particular, the approximation of $v$ in terms of $a$ by the soliton and the linearization remains exactly the same, including $T_a$ and $v(\I)$, even though the relation between $H$ and $a$ is different. This suggests that those asymptotic expansions may have some universal character for the critical inequalities of Trudinger-Moser type.

Let $u\in X^D$ be a maximizer for $\cS^D_H(f^*_H)$. 
First, the change of variables to the form of $\sS_0$ in \eqref{SH 2 S0} is immediate in this case: Obviously  we should maximize the radius $R$ at $u=H$ by spending the half kinetic energy in $R<r<1$, as there is no other factor which should be taken account of. Moreover, this maximization is simply given by 
\EQ{
 u(r)=\log(1/r)[\log(1/R)]^{-1/2},}
as is well known and an easy consequence of \eqref{Schw}. Thus we obtain $R=e^{-H^2}$ and 
\EQ{
 \cS^D_H(f^*_H)=\sS_0(H):=\sup\Br{2\int_0^\I e^{\y}dt \Bigm| \int_0^\I|\dot v|^2dt=1,\ v(0)=0},}
with 
\EQ{
 \y \pt:= u^2-2H^2-2t -u^{-2} -c_D' u^{-4}
 \pr= -2(t-v^2)-(v-H)^2 -u^{-2} - c_D' u^{-4}. }

Henceforth the same symbols are used as in Section \ref{sect:loc int}, even if the expression may be slightly different between the two cases on $\R^2$ and on $D$ (such as $\y$ and $\sS_0(H)$ above). 
With this new $\y$, the same argument after \eqref{SH 2 S0} works with no difference until \eqref{tev 01}, which should be modified to 
\EQ{ \label{tev 01 D}
 \te_v=2u + \frac1u +O(a^3)=2H+2aT_a+av_+ + \frac{a}{2} +O(a^3\LR{t_a}),}
namely the sign of the last term before the error. 
This change results in modifying \eqref{H+aT} as 
\EQ{ \label{H+aT D}
 H+aT_a = \frac{2}{a} + \frac a2 + O(a^3),}
and so \eqref{H by a} as 
\EQ{ \label{H 2 a D}
 \pt H=\frac{1}{a}+O(a^3),  \pq a=\frac{1}{H}+O(H^{-5}), 
 \pr aT_a=\frac{1}{a}+\frac a2+O(a^3)=v(\I)+O(a^3),}
thus the relation between $H$ and $a$ is modified, while the last formula for $aT_a$ and $v(\I)$ remains the same. 

Then the modified \eqref{H+aT D} affects \eqref{exp xi 0} in the $v_+$ term as 
\EQ{ \label{exp xi 0 D}
 \x=(H+aT_a)^2 - 2H^2 - 2T_a + \frac{a^2}{2}v_+ + 2v_R + \frac{a^2}{4}v_+^2 + O(a^4).}
Then the definition of $\hat H$ in \eqref{AHT exp} is modified to 
\EQ{ \label{def hat H D}
 H=1/a+a^3\hat H,} 
while \eqref{xi canc} is replaced with
\EQ{ \label{xi canc D}
 \pt (H+aT_a)^2 - 2H^2 - 2T_a 
 \pr= \BR{2/a+a/2+a^3(\hat H+\hat T)}^2 -2(1/a+a^3\hat H)^2 - 2/a^2-1-2a^2\hat T
 \pr= 1 + a^2(2\hat T+1/4) + a^4(\hat H+\hat T) + O(a^6),}
and plugging this into above, we obtain 
\EQ{
 \x= 1 + a^2(2\hat T + 1/4 + v_+/2 + v_+^2/4) + 2v_R +O(a^4).}
Using this and 
\EQ{ \label{exp u-2 0}
 u^{-2}=(2H+O(a\LR{t_a}))^{-2}=\frac{a^2}{4}+O(a^4\LR{t_a}^2)}
instead of \eqref{1/u}, we replace \eqref{exp I0} with 
\EQ{
 e^{\y} =\frac{e}{4}|\dot w_0|\BR{1 + \frac{a^2}{2}(v_+ + v_+^2/2) + 2(a^2\hat T+v_R)}+a^4\ox_{|t_a|}.}
This is the first place where the lower order term $u^{-2}$ in $\y$ affects the expansion (we would get an extra $a^2/4$ without it). 
After using the same cancellation of $a^2\hat T+v_R$ as before, we obtain, in place of \eqref{exp sS 0},  
\EQ{
 \cS^D_H(f^*_H)\pt=2\int_0^\I e^{\y}dt
 =\frac{e}{2}\int_0^\I |\dot w_0|\BR{1+\frac{a^2}{2}(v_+ + v_+^2/2)}dt + O(a^4)
 \pr=e +O(a^4).}

Now we proceed to the next order, following Section \ref{sect:next exp}. 
First, from \eqref{tev 01 D} and \eqref{H+aT D} we see that $\te_v$ satisfies the same expansion as \eqref{exp tev 1}, and so Section \ref{ss:linearize} works the same. 
The first difference in that section appears in \eqref{exp tev 2}, where we use  
\EQ{ \label{exp 1/u}
 1/u = \BR{\frac2a+\frac{a}{2}(v_++1)+O(a^3)}^{-1} = \frac a2 - \frac{a^3}{8}(v_++1) + O(a^5\LR{t_a}^2).}
Then we modify \eqref{exp tev 2}, noting \eqref{def hat H D} as well,  
\EQ{ \label{exp tev 2 D}
 \pt\te_v-\frac 4a=2u+\frac{1}{u}+\frac{2}{u^3}-\frac 4a+O(a^5)
 \pr=2H -\frac 2a + a(1+v_+) + \frac{a^3}{4}(4c_0 + v_1 + 1)
  \pn+ \BR{\frac a2 - \frac{a^3}{8}(v_++1)} + O(a^5\LR{t_a}^2)
 \pr=\frac a2(3+2v_+) + \frac{a^3}{8}(8c_0 + 16\hat H +1  -  v_+  + 2 v_1) +O(a^5\LR{t_a}^2).}
Hence $\cR_1$ should be modified from \eqref{def R1} to 
\EQ{
  \cR_1:= \int_\I^t \dot w_0 w_0(8c_0+16\hat H+1-v_++2v_1) + (\dot w_0 w_1 + w_0\dot w_1)(3+2v_+)dt,}
which changes \eqref{exp R1=0} to 
\EQ{
  O(a^2)\pt=(8c_0+16\hat H+1)\int_0^\I \dot w_0w_0dt - \int_0^\I \dot w_0w_0v_+ dt
 + \eqref{R1 new int}
 \pr=-2(8c_0+16\hat H+1)-3+5+16c_0+\ox_H=-32\hat H+\ox_H,}
leading to the same cancellation $\hat H=O(a^2)$. Thus \eqref{H 1} is replaced with 
\EQ{
 H = \frac 1a + O(a^5).}
Next, \eqref{exp xi 1} should be modified, according to \eqref{H+aT D} and \eqref{xi canc D}, 
\EQ{
  \x\pt=1 + a^2(2\hat T+1/4) + a^4\hat T + (a^2/2+a^4\hat T)v_+ 
 \prQ+ (2+a^2/2)v_R + \frac{a^2}{4}v_+^2 + \frac{a^2}{2}v_+v_R + O(a^6)
 \pr=1 + \frac{a^2}{2}(2c_0+1/2-\log 2 + v_+ + v_+^2/2 + v_1)
 \prq + \frac{a^4}{4}(2c_0+ 2c_0 v_+  + v_1/2 + v_1 v_+/2  + 8\hat T_1 + 2v_R^1) + O(a^6).}
Using this and \eqref{exp 1/u}, we obtain, in place of \eqref{exp ey 1}--\eqref{exp ey 2}, 
\EQ{
 e^\y \pt= \frac{e^{\x-u^{-2}-c_D'u^{-4}}}{4\cosh^2t_a}
 \pr=\frac{e|\dot w_0|}{4}\exp\Biggl[\frac{a^2}{2}(2c_0 -\log 2 + v_+ + v_+^2/2 + v_1)
 \prq + \frac{a^4}{4}\Br{(2c_0+1/2+v_1/2)(1+ v_+)  + 8\hat T_1 + 2v_R^1-c_D'/4}\Biggr]+a^6\ox_{|t_a|}
 \pr=\frac{e|\dot w_0|}{4}\BR{1+\frac{a^2}{2}f_1+\frac{a^4}{4}f_2+a^4(2\hat T_1+v_R^1/2)} + a^6\ox_{|t_a|}.}
where $f_1$ and $f_2$ are modified as follows: 
\EQ{
 \pt f_1 = 2c_0 -\log 2  + v_+ + v_+^2/2 + v_1, 
 \pr f_2 = f_1^2/2 + (2c_0+1/2+v_1/2)(1+ v_+)
 \prq= c_4 + c_5 v_+ + c_6 v_+^2 + \tfrac 12v_+^3 + \tfrac 18 v_+^4 + c_7v_1 + \tfrac 32v_1v_+ + \tfrac12v_1v_+^2 + \tfrac12v_1^2,}
where $c_4$--$c_7$ are modified to (while $c_0,c_1$ are the same as before)
\EQ{
 c_4 = \tfrac12 c_1^2 + \log 2 -c_D'/4, \pq c_5=c_1+2c_0-\tfrac12, \pq c_6=\tfrac12c_1, \pq c_7=c_1-\tfrac12.}
Hence the computation of integrals of $O(a^4)$ remains the same, which leads to
\EQ{
 2\int_0^\I e^\y dt = e\BR{1+\frac{a^4}{8}c_8}+O(a^6),}
with $c_8$ defined as before (with the same $c_\I=(1-c_1)/2$ as before)
\EQ{
 c_8 \pt= 2c_4-2c_5+4c_6 + 2c_7(1-c_1) + \z(3)-1 - 2c_\I + 4c_\I^2 
 \pr= -c_D'/2 + 1+\z(3) =0.}
Intriguingly, the part $1+\z(3)$ is the same as in the case of $\R^2$, even though $c_4$--$c_7$ are  different! Of course, the constant $c_D=c_D'-1/2$ was chosen to make $c_8=0$. 
Using \eqref{H 2 a D}, we finally obtain \eqref{exp SD}.

\appendix 

\section{Some explicit integrals} \label{app:exp int}
For polynomials in $w_0$ and $v_+$ multiplied with $\dot w_0$ (see \eqref{def w0} and \eqref{def vpm} for definition of $w_0,v_+$), we have the following formulas. 
For any $k\in\N$ and any polynomial $\fy(v)$, 
\EQ{
 \p_t[(w_0-2)^k\fy(v_+)]\pt=k(w_0-2)^{k-1}\dot w_0\fy(v_+)+(w_0-2)^kw_0\fy'(v_+)
 \pr=(w_0-2)^{k-1}\dot w_0(k+\p_v)\fy(v_+),}
using $\dot w=w_0(w_0-2)$. 
Hence with
\EQ{
 \psi(v):=(k+\p_v)^{-1}\fy(v)=-\sum_{j=0}^\I(-k)^{-j-1}\fy^{(j)}(v),}
we have 
\EQ{ \label{prim zwv}
 \dot w_0(w_0-2)^{k-1}\fy(v_+) = \p_t[(w_0-2)^k\psi(v_+)],}
and so, for any $j\in\N_0$, 
\EQ{ \label{int wwv}
 \int_0^\I \dot w_0(w_0-2)^{k-1}v_+^j dt = \int_0^\I w_0(w_0-2)^k v_+^jdt
 =2^k(-1)^{k+j}k^{-j-1}j! +\ox_H.}
In particular we have
\EQ{ \label{int zwv}
 \pt \int_0^\I \dot w_0v_+^j dt = 2(-1)^{j+1}j!+\ox_H,
 \pr \int_0^\I \dot w_0w_0v_+^j dt = 4(-1)^{j+1}(1-2^{-j-1})j!+\ox_H.}
Also we have 
\EQ{ \label{int c0}
 \int_0^\I (w_0-2)v_+dt = \BR{v_+^2/2-2\nu_\I}_0^\I = 4c_0 + \ox_H,}
which can be computed also by Lemma \ref{zeta int}. 

\section{Asymptotic expansions for $\sS_0(H)$} \label{app:asy exp}
In this section, the asymptotic expansions derived for $\sS_0(H)$  in Sections \ref{sect:loc int}--\ref{sect:next exp} are summarized for the reader's convenience. See \eqref{SH 2 S0} for the definition of $\sS_0(H)$. 
Let $v$ be a maximizer of $\cS_0(H)$. Then 
\EQ{
 \pt \dot v(0)=a=\frac1H+\frac{2}{H^3}+O(H^{-5}), \pq H=\frac 1a+\frac a2+a^3\hat H=\frac 1a+\frac a2+O(a^5), 
 \pr u=H+v, \pq v(0)=0,\pq v(\I)=\frac1a+\frac{a}2+\frac{a^3}{2}c_0+O(a^5), 
 \pr \dot v(T_a)=\frac a2,
 \pq T_a=\frac{1}{a^2}+\frac12+a^2\hat T=\frac{1}{a^2}+\frac12+\frac{a^2}{4}(2c_0+\log 2)+O(a^4),
 \pr v=\frac a2(v_-+v_R)=aT_a+\frac a2(v_++v_R)=\CAS{\frac a2v_-+\frac{a^3}{8}(v_1-2\log 2)+a^5\ox_{-t_a} \\ v(\I)+\frac a2v_+ + \frac{a^3}{8}v_1+a^5\ox_{t_a},}
 \pr \dot v=\frac a2(w_0+w_R)=\frac a2w_0+\frac{a^3}{8}w_1+a^5\ox_{|t_a|},
 \pq \ddot v=\frac a2\dot w_0+\frac{a^3}{8}\dot w_1+a^5\ox_{|t_a|},
}
with $t_a:=t-T_a$, $c_0:=\z(2)/4$, and
\EQ{
 \pt v_-=2t-\log(1+e^{2t_a}), \pq v_+=-\log(1+e^{-2t_a}), 
 \pr v_1=-2\nu_0 \tanh t_a-2c_0+\log 2-\frac 12v_+^2,
 \pr w_0=\dot v_\pm=1-\tanh t_a,
 \pq w_1=\dot v_1=(w_0-2)(2w_0\nu_0+v_+),
}
where $\nu_0$ can be defined by, using the dilogarithm function $\Li_2$, 
\EQ{
 \nu_0:=-\int_0^{t_a}\log(1+e^{-2s})ds+\frac12\log 2=\frac12\Li_2(-e^{-2t_a})-c_0+\frac12\log 2.}

In the case of disk in Section \ref{sect:disk}, the expansion of $a$ is changed to
\EQ{
 a=\frac1H+O(H^{-5}), \pq H=\frac1a+O(a^5),}
but all the other expansions in the above list remain the same (in terms of $a$).

\section{The exact Trudinger-Moser from $\R^2$ to the disk} \label{app:R2 2 D}
In this section, we derive the original Trudinger-Moser inequality \eqref{oTM} on the disk $D:=\{x\in\R^2\mid|x|<1\}$, using the Trudinger-Moser on $\R^2$ with the exact growth condition \eqref{TM cond B}. 
The opposite direction was given in \cite[Appendix B]{MS}, but relying on the exponential radial Sobolev inequality \eqref{exp radS}. 
Since \eqref{exp radS} follows immediately from the exact Trudinger-Moser on $\R^2$, one might regard the latter as the master inequality among those three. 
It would be interesting if one can relate them concerning the existence of maximizer. 

Let $u\in H^1_0(D)$ be a radial decreasing function with $K(u)\le 2$. 
Take $R\in(0,1)$ such that $K_{(0,R)}(u)\le 1$ and $K_{(R,1)}(u)\le 1$, and let $H:=u(R)$ and $\ell:=|\log R|$. Then the simple Schwarz estimate \eqref{Schw} implies $H^2\le\ell$ or equivalently $e^{2H^2}\le R^{-2}$. 
Let $v:=\max(u-H,0)\in H^1_0(RD)$ and $w:=u-v$. Then $K(v)=K_{(0,R)}(u)\le 1$ and $K(w)=K_{(R,\I)}(u)\le 1$, hence 
\EQ{
 \int_R^1e^{u^2}rdr \le \int_0^1 e^{w^2}rdr \lec 1,}
by the subcritical Trudinger-Moser, either on $\R^2$ or on $D$. In particular, the exact inequality is more than sufficient here.  
For $r<R$, we have 
\EQ{ \label{inner int}
  e^{u^2} = e^{2v^2-(v-H)^2}e^{2H^2} \le R^{-2} e^{2v^2}e^{-(v-H)^2}.}
Let 
\EQ{
 I:=\{r\in(0,R) \mid e^{(v-H)^2}\ge 1+v^2\}.}
Then 
\EQ{
 \int_Ie^{u^2}rdr \le R^{-2}\int_0^R \frac{e^{2v^2}}{(1+v^2)}rdr
 \lec R^{-2}\|v\|_{L^2(RD)}^2,}
by the exact Trudinger-Moser on $\R^2$ with \eqref{TM cond B}. This is bounded, since we have 
\EQ{
 \int_0^R v^2 rdr \le \int_0^R \log(R/r)rdr \sim R^2,}
applying \eqref{Schw} to $v$. 

It remains to estimate the integral on $J:=(0,R)\setminus I$.  
We may assume $H\ge 1$, since otherwise \eqref{Schw} is sufficient:
\EQ{
 0<r<R \pt\implies u(r) 
 \le H+\sqrt{\log(R/r)}
 \pn\implies e^{u(r)^2} \le e^{CH^2}(R/r)^{3/2}.} 
For $H\ge 1$, $1+v^2\ge e^{(v-H)^2}$ implies that $v\sim H$ on $J$. 
Let $\ti u:=\min(u,4H)\in H^1_0(D)$. Then $K(\ti u)\le K(u)\le 2$ and 
\EQ{ \label{int J}
 \int_J e^{u^2}rdr \lec \int_0^R \frac{H^2}{\ti u^2+1}e^{\ti u^2}rdr
 = \int_0^{RH} \frac{e^{\fy^2}}{\fy^2+1}rdr,}
where $\fy(x)\in H^1_0(HD)$ is radial decreasing defined by 
\EQ{
 \fy(r)=\CAS{\ti u(r/H) &(r<RH)\\ \frac{H}{\ell}\log(H/r) &(RH<r<H).}}
Then 
\EQ{
 K(\fy)\pt=K_{(0,RH)}(\fy)+K_{(RH,\I)}(\fy)
 \pr\le K_{(0,R)}(u) + \int_{RH}^H \frac{H^2}{\ell^2 r^2}rdr
 \le 1+H^2/\ell \le 2.}
Hence the exact Trudinger-Moser on $\R^2$ with \eqref{TM cond B} implies 
\EQ{
 \eqref{int J} \lec \|\fy\|_2^2 
 = H^2M_{(0,R)}(\ti u)+\int_{RH}^H\frac{H^2|\log(H/r)|^2}{\ell^2}rdr
 \lec H^4R^2 + \frac{H^4}{\ell^2} \lec 1,}
thus we obtain the desired bound.


\begin{thebibliography}{10}
\bibitem{ASM}C.~O.~Alves, M.~A.~S.~Souto and M.~Montenegro, {\it Existence of a ground state solution for a nonlinear scalar field equation with critical growth.} Calc. Var. Partial Differential Equations {\bf 43} (2012), no. 3-4, 537--554.
\bibitem{BN} H.~Br\'ezis and L.~Nirenberg, {\it Positive solutions of nonlinear elliptic equations involving critical Sobolev exponents.} Comm. Pure Appl. Math. {\bf 36} (1983), no. 4, 437--477.
\bibitem{CC} L.~Carleson and S.-Y.~A.~Chang, {\it On the existence of an extremal function for an inequality of J.~Moser.} Bull. Sc. Math. (2) {\bf 110} (1986), no. 2, 113--127.
\bibitem{FMR}
D.~G.~de Figueiredo, O.~H.~Miyagaki and B.~Ruf, {\it Elliptic equations in $R^2$ with nonlinearities in the critical growth range}, Calc. Var. Partial Differential Equations {\bf 3} (1995), 139--153. 
\bibitem{FR}
D.~G.~de Figueiredo and B.~Ruf, {\it Existence and Non-Existence of Radial Solutions for Elliptic Equations with Critical Exponent in $\R^2$}. Comm. Pure Appl. Math. {\bf 48} (1995), 639--655.
\bibitem{FOR}
D.~G.~de Figueiredo, J.~M.~do \'O and B.~Ruf, {\em On an inequality by N.~Trudinger and J.~Moser and related elliptic equation}, Comm. Pure Appl. Math. {\bf 55} (2002) 1--18. 
\bibitem{TMX} S.~Ibrahim, N.~Masmoudi and K.~Nakanishi, {\it Trudinger-Moser inequality on the whole plane with the exact growth condition.} J. Eur. Math. Soc. {\bf 17} (2015), no.~4, 819--835.
\bibitem{era} S.~Ibrahim, N.~Masmoudi and K.~Nakanishi, {\it Correction to the article ``Scattering threshold for the focusing nonlinear Klein-Gordon equation".} Anal. PDE {\bf 9} (2016), no.~2, 503--514.
\bibitem{I} 
M.~Ishiwata, {\it Existence and nonexistence of maximizers for variational problems associated with 
Trudinger-Moser type inequalities in $R^N$.} Math. Ann. {\bf 351} (2011), no.~4, 781--804.
\bibitem{MM}
G.~Mancini and L.~Martinazzi, {\it The Moser-Trudinger inequality and its extremals on a
disk via energy estimates}, Calc. Var. Partial Differential Equations {\bf 56} (2017), no.~4, Art.~94, 26 pp.
\bibitem{MT}
G.~Mancini and P.~-D.~Thizy, {\it Non-existence of extremals for the Adimurthi-Druet inequality}, J. Differential Equations {\bf 266} (2019), no.~2--3, 1051--1072. 
\bibitem{MS}
N.~Masmoudi and F.~Sani, {\it Adams' inequality with the exact growth condition in $\R^4$.} Comm. Pure Appl. Math. {\bf 67} (2014), no.~8, 1307--1335. 
\bibitem{Mo}
J.~Moser, {\em A sharp form of an inequality of N. Trudinger}, Ind. Univ. Math. J. {\bf 20} (1971), 1077--1092. 
\bibitem{Pru}
A.~R.~Pruss, {\it Nonexistence of maxima for perturbations of some inequalities with critical growth}, Canad. Math. Bull. {\bf 39} (1996) no.~2, 227--237. 
\bibitem{Ruf}
B.~Ruf, {\em A sharp Trudinger-Moser type inequality for unbounded domains in $\mathbb R\sp 2$,} J.~Funct.~Anal. {\bf 219} (2005), no.~2, 340--367.
\bibitem{RS}
B.~Ruf and F.~Sani, {\it Ground states for elliptic equations in $\R^2$ with exponential critical growth.}  Geometric properties for parabolic and elliptic PDE's, 251--267, Springer INdAM Ser., 2, Springer, Milan, 2013. 
\bibitem{T}
P.~-D.~Thizy, {\it When does a perturbed Moser-Trudinger inequality admit an extremal?} arxiv:1802.01932. 
\end{thebibliography}
\end{document}